\numberwithin{equation}{section}
\newtheorem{theorem}{\textbf{Theorem}}[section]
\newtheorem{lemma}{\textbf{Lemma}}
\newtheorem{remark}[theorem]{Remark}
\newcommand{\beqnar}{\begin{eqnarray*}}
	\newcommand{\eeqnar}{\end{eqnarray*}}
\newcommand{\ba}{\begin{array}}
	\newcommand{\ea}{\end{array}}
\newenvironment{proof}[1]{\begin{trivlist}\item {\it
			\bf Proof.}\quad} {\qed\end{trivlist}}
\journal{}
\begin{document}
	
	\begin{frontmatter}

		\title{Non-uniform Berry-Esseen Bound by Unbounded Exchangeable Pair Approach }

		\author{Dali Liu}
		
		\address{ Institute for Financial Studies, Shandong University, Jinan, 250100, China
		}
		
		\author{ Zheng Li}
		
		\address{School of Mathematics, Shandong University, Jinan, 250100, China
		}
		
		\author{ Hanchao Wang\footnote{Corresponding author, wanghanchao@sdu.edu.cn}}
		
		\address{Institute for Financial Studies, Shandong University, Jinan, 250100, China
		}

		\author{Zengjing Chen}
		
		\address{School of Mathematics, Shandong University, Jinan, 250100, China
		}		
		
		\begin{abstract}
		In this paper, a new technique is introduced to obtain non-uniform Berry-Esseen bounds for normal and nonnormal approximations by unbounded exchangeable pairs. This technique does not rely on the concentration inequalities developed by Chen and Shao \cite{cls1, cls2} and can be applied to the quadratic forms, the general Curie-Weiss model and an independence test.  In particular, our non-uniform result about the independence test is under 6th moment condition, while the uniform bound in Chen and Shao \cite{cs2}  requires  24th moment condition.
		\end{abstract}
		
		\begin{keyword}
			Non-uniform Berry-Esseen bounds, Stein's method,  exchangeable pairs.
		\end{keyword}
		
	\end{frontmatter}
	
	
\section{Introduction}

Since Charles Stein presented his ideas in the seminal paper \cite{s1},  there have been a lot of research activities around Stein's method. Stein's method is a powerful tool to obtaining the approximate error of  normal and non-normal approximations.  The readers are referred to Chatterjee \cite{c} for  recent developments of Stein's method.

While several works on Stein's method pay attention to the uniform error bounds, Stein's method  showed to be  powerful on the non-uniform error bounds, too. By Stein's method, Chen and Shao \cite{cls1, cls2} obtained the non-uniform Berry-Esseen bound for  independent or locally dependent random variables. The key point in their works is the  concentration inequality, which also has strong connection with another approach called  the \textit{exchangeable pair approach}. 

The exchangeable pair approach turned out to be an important topic within Stein's method.  Let  $W$ be the random variable under study.  The pair $ (W,W') $ is called an exchangeable pair if $ (W,W')$ and  $(W',W) $ share the same distribution. With $ \Delta=W-W' $, Rinott and Rotar \cite{rr},  Shao and Su \cite{ss} obtained a Berry-Esseen bound of the normal approximation when $\Delta$ is bounded.  If $\Delta$ is unbounded, Chen and Shao \cite{cs2} provided a Berry-Esseen bound and got the optimal rate for an independence test.  The concentration inequality  plays a crucial role  in previous studies, such as Shao and Su \cite{ss} , Chen and Shao \cite{cs2}. Recently, Shao and Zhang \cite{sz}  made a big step for unbounded $\Delta$ and without using the concentration inequality. They obtained a simple bound as seen from the following result. 

\begin{theorem}(Shao and Zhang \cite{sz}) Let $ (W,W') $ be an exchangeable pair, $ \Delta=W-W' $, and the relation
	\begin{equation*}
	E(\Delta|W)=\lambda(W+R),a.s.,
	\end{equation*}	
	holds for some constant $ \lambda\in(0,1) $ and a random variable $ R $ . Then,
	\begin{align}\label{1.1}
	& \sup\limits_{z\in \mathbb{ R}}\big|  P(W\leq z)-\varPhi(z)  \big|  \notag     \\
	& \leq E\big| 1-\dfrac{1}{2\lambda}E(\Delta^2 |W\big)|+ \dfrac{1}{\lambda}E\big|  E\big(\Delta\Delta^*|W\big)\big| +E| R|, 
	\end{align}
	where $ \varPhi(z) $, $ z\in\mathbb{ R} $, is the standard normal ditribution function, $ \Delta^*(W,W') $ is a random variable satisfying $ \Delta^*(W,W')=\Delta^*(W',W) $ and $ \Delta^*\geq|\Delta| $ ~a.s..
\end{theorem}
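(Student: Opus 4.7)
My plan is to run the standard Stein's-method exchangeable-pair argument while keeping the unbounded $\Delta$ under control through the symmetric majorant $\Delta^*$.

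First, for each $z\in\mathbb{R}$ I will let $f=f_z$ be the bounded solution of the Stein equation $f'(w)-wf(w)=\mathbf{1}\{w\leq z\}-\Phi(z)$, which satisfies the classical bounds $\|f\|_\infty\leq 1$, $\|f'\|_\infty\leq 1$, and $\|(wf)'\|_\infty\leq 1$; then $P(W\leq z)-\Phi(z)=E[f'(W)-Wf(W)]$. Applying exchangeability to the antisymmetric kernel $\Delta(f(W)+f(W'))$ gives $E[\Delta(f(W)+f(W'))]=0$, and hence $2E[\Delta f(W)]=E[\Delta(f(W)-f(W'))]$. Combining this with the assumption $E[\Delta\mid W]=\lambda(W+R)$ yields $E[Wf(W)]=\tfrac{1}{2\lambda}E[\Delta(f(W)-f(W'))]-E[Rf(W)]$.

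Next I will substitute this expression into $E[f'(W)-Wf(W)]$ and add and subtract $\tfrac{1}{2\lambda}E[\Delta^2\mid W]f'(W)$ to obtain
\begin{align*}
E[f'(W)-Wf(W)] &= E\!\left[f'(W)\Bigl(1-\tfrac{1}{2\lambda}E[\Delta^2\mid W]\Bigr)\right] \\
&\quad + \tfrac{1}{2\lambda}E\!\left[\Delta^2 f'(W)-\Delta(f(W)-f(W'))\right] + E[Rf(W)].
\end{align*}
Because $\|f\|_\infty,\|f'\|_\infty\leq 1$, the first and third summands are bounded by $E\bigl|1-\tfrac{1}{2\lambda}E[\Delta^2\mid W]\bigr|$ and $E|R|$, which are exactly the outer two terms of \eqref{1.1}. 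For the middle summand I will use the Taylor identity $f(W)-f(W')=\Delta\int_0^1 f'(W-u\Delta)\,du$ so that $\Delta^2 f'(W)-\Delta(f(W)-f(W'))=\Delta^2\int_0^1 [f'(W)-f'(W-u\Delta)]\,du$, and split the integrand via Stein's equation into a smooth part, bounded pointwise by $u|\Delta|$ through $\|(wf)'\|_\infty\leq 1$, and a jump part, bounded by $\mathbf{1}\{|W-z|\leq u|\Delta|\}$.

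The main obstacle will be to convert the resulting $\Delta^2$-weighted remainder into the quantity $\tfrac{1}{\lambda}E|E[\Delta\Delta^*\mid W]|$ and not merely into a third absolute moment $E|\Delta|^3$, which a naive Taylor bound would give. The idea is to dominate both the Lipschitz factor $u|\Delta|$ and the localizing indicator $\mathbf{1}\{|W-z|\leq u|\Delta|\}$ by quantities involving $\Delta^*$ via $|\Delta|\leq\Delta^*$, write the leftover $\Delta^2$ as $\Delta\cdot\Delta$, and then exploit the symmetry $\Delta^*(W,W')=\Delta^*(W',W)$ together with the exchangeability swap to bring $\Delta^*$ inside the conditional expectation on $W$, yielding $\bigl|E[\Delta^2 f'(W)-\Delta(f(W)-f(W'))]\bigr|\leq 2\,E\bigl|E[\Delta\Delta^*\mid W]\bigr|$, from which \eqref{1.1} follows. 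The delicate point is precisely that the jump (indicator) contribution must be controlled \emph{without} the Chen--Shao concentration inequality; the symmetry of $\Delta^*$ together with the exchangeability swap is what is forced to do the work that the concentration inequality usually performs, and arranging this absorption cleanly is the step where all the novelty sits.
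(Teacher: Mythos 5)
Your skeleton---the antisymmetric-function identity $E[\Delta(f(W)+f(W'))]=0$, the resulting three-term decomposition, and the bounds $E\big|1-\frac{1}{2\lambda}E(\Delta^2|W)\big|$ and $E|R|$ for the outer terms---is exactly the route the paper takes (in the proof of its Theorem \ref{mr}, which contains this uniform statement as the special case $g(x)=x$). Your treatment of the jump part of the remainder is also essentially the paper's: the indicator increment over the integration interval is dominated by $I(W>z)$ (or $I(W'\le z)$), the interval length by $\Delta^*$, and since $I(W>z)$ is $\sigma(W)$-measurable one conditions first to get $\frac{1}{2\lambda}E\big|E(\Delta\Delta^*|W)\big|$; this is precisely how the concentration inequality is avoided.

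The genuine gap is in the smooth part. You bound $|Wf_z(W)-(W-u\Delta)f_z(W-u\Delta)|$ by $u|\Delta|$ via $\|(wf_z)'\|\le 1$, which after multiplying by $\Delta^2$ and integrating leaves a quantity of order $|\Delta|^3$. No symmetrization can convert $E|\Delta|^3$ (or $E[\Delta^2\Delta^*]$) into $E\big|E(\Delta\Delta^*|W)\big|$: after replacing one factor $|\Delta|$ by $\Delta^*$ you still have three $\Delta$-type factors, one too many, and---decisively---the right-hand side of (\ref{1.1}) exploits cancellation inside the conditional expectation while a third absolute moment does not. Take $\Delta^*=|\Delta|$ and suppose the conditional law of $\Delta$ given $W$ is symmetric; then $E(\Delta\Delta^*|W)=0$ a.s.\ while $E|\Delta|^3>0$, so your claimed estimate $\big|E[\Delta^2f'(W)-\Delta(f(W)-f(W'))]\big|\le 2E\big|E(\Delta\Delta^*|W)\big|$ cannot be reached by any argument that passes through $|\Delta|^3$. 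The correct argument (the Shao--Zhang lemma that the paper invokes as (\ref{I1})) does not Taylor-expand $wf_z(w)$ at all: it uses that $h(w)=wf_z(w)$ is \emph{monotone} and uniformly bounded (property (B3)--(B4): $F(z)-1\le h\le F(z)$), so the increment $h(W+t)-h(W)$ for $t$ between $-\Delta$ and $0$ has a definite sign and is dominated by a bounded $\sigma(W)$-measurable function rather than by $|t|$. That yields exactly two factors, $\Delta$ times $\Delta^*$, multiplied by a bounded function of $W$, which then conditions to $\frac{1}{2\lambda}E\big|E(\Delta\Delta^*|W)\big|$. Replacing your Lipschitz step by this monotonicity argument is not cosmetic; it is the step that makes the theorem true in the stated form.
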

In this paper,  inspired by the idea of Shao and Zhang \cite{sz}, we extend their results and get the non-uniform Berry-Esseen bound for unbounded exchangeable pairs by combining new techniques.  In addition, Chatterjee and Shao \cite{cs} introduced a new approach for non-normal approximation by Stein's method in the case of bounded $\Delta$ . When $\Delta$ is unbounded, Shao and Zhang \cite{sz} obtained Berry-Esseen bounds for non-normal approximation. In this paper, we extend their result to the non-uniform case. 

The main contribution of this paper is threefold. First,    we introduce a new technique to obtain non-uniform Berry-Esseen bounds  for unbounded exchangeable pairs. Our proof does not rely on the concentration inequality. Second, we present the non-uniform Berry-Esseen bound  for
 non-normal approximation. As far as we know, there are only a few results in this area. For example, Shao, Zhang and zhang \cite{szz} obtained a Cram\'er-type moderate deviation  for non-normal approximation. At last, we apply our results to quadratic forms, the general Curie-Weiss model, and an independence test.  Especially, our result on  independence test is established under the 6th moment condition, while Chen and Shao \cite{cs2} obtained the same rate for the uniform case under the 24th moment condition.
  
The paper is organized as follows. We present the main result in Section 2.  We give some technical lemmas and the proof of the main result  in Section 3.  The applications of our result are collected  in Section 4.

\section{Main result}

In this section, we  present  some notions and notations about Stein's method. Further details can be found in Shao and Zhang \cite{sz}. We then state our main result.

Let  the function $ g(x),x\in\mathbb{ R} $, of the class $ \mathcal{C}^{2}$, satisfy the following conditions:      
\begin{enumerate}
	\item[(A1)]   $ g(x) $ is non-decreasing, and $ x g(x)\geq0 $ for $ x\in \mathbb{R} $;            
	\item[(A2)] $ g'(x)$ is continuous and $2(g'(x))^{2}-g(x)g''(x)\geq0 $ for all $ x\in \mathbb{R} $;
	\item[(A3)] 
	$ \lim_{x\downarrow -\infty}g(x)p(x)=0$ and $\lim_{x\uparrow +\infty}g(x)p(x)=0 $ , where
	\begin{equation}\label{2.1}
		p(x)=c_1 e^{-G(x)},\quad G(x)=\int^{x}_{0}g(t)dt,~x\in\mathbb{ R},
	\end{equation}       	
	and $ c_1 $ is the constant such that $ \int^{+\infty}_{-\infty}p(x)dx=1 $ .
\end{enumerate} 
Let us  note that if $ g(x)=x $, then $ p(x),x\in \mathbb{R} $, is the standard normal density function. \\

Let $ F(z), z\in\mathbb{ R} $, be the distribution function whose density function is $ p(z) $  as defined in (\ref{2.1}). For a fixed $ z \in \mathbb{ R} $, let $ f_z(x) $ denote the solution of Stein's equation, here and below $ f'_z(x)=\frac{d}{dx}f_z(x) $:
\begin{equation*}
	f_z'(x)-g(x)f_z(x)=I(x\leq z)-F(z), ~x\in\mathbb{ R},
\end{equation*}\\
$ I(\cdot) $ is the indicator function. \\
By Chatterjee and Shao \cite{cs}, 
\begin{equation*}
	f_z(x)=\begin{cases} 
		\dfrac{F(x)(1-F(z))}{p(x)},x\leq z,\\
		\dfrac{F(z)(1-F(x))}{p(x)},x>z.
	\end{cases}
\end{equation*}\\

From Shao and Zhang \cite{sz},  we know that if (A1)$ \sim $(A3) hold, then, for any fixed $ z\in\mathbb{ R} $, $ f_z(x) $ has the following properties:
\begin{enumerate}
	\item[(B1)] $ 0\leq f_z(x)\leq \dfrac{1}{c_1} $ , $ x\in\mathbb{ R}$,
	\item[(B2)] $ \rVert f'_z\rVert\leq1 $  ($ \rVert\cdot\rVert$ is the sup norm.) ;
	\item[(B3)] $ F(z)-1 \leq g(x) f_z(x) \leq  F(z) $  ;
	\item[(B4)] $ g(x)f_z(x) $ is non-decreasing in x.
\end{enumerate}

For a random variable $ W $, applying  Stein's equation to it and taking expectation on both sides, we have:
\begin{equation*}
	P(W\leq z)-F(z)=Ef_z'(W)-Eg(W)f_z(W),\quad z\in\mathbb{ R}.
\end{equation*}

Before presenting our main result, we introduce another  condition  we want $ g(x) $ to  satisfy:
\begin{enumerate}
	\item[(A4)]
	There is  a number $ \tau\in(0,1) $ and a positive constant $ K_{\tau} $ such that  
	\begin{equation*}
		\frac{g(x)}{g(\tau x)}\leq K_{\tau},~ \text{for all $x\in \mathbb{ R} $. }
	\end{equation*}
\end{enumerate}
There is a  large class of functions g satisfying condition $ (A4) $, besides the conditions (A1)$ \sim $(A3). A typical example is $ g(x)=sgn(x)|x|^{\alpha}, \alpha\geq1 $($\alpha$ is a real number).\\

Let \textbf{X} be a random vector in $ \mathbb{ R}^n $ and  $ W=\varphi (\textbf{X}) $  the random variable of interest($\varphi$ is a cartain measurable function). Denote by $ F(z) $, $ z\in\mathbb{ R} $,  the distribution function whose density function is defined by (\ref{2.1}). Now we present our main result.
\begin{theorem}\label{mr}
	Let $ (W,W') $ be an exchangeable pair, $ \Delta=W-W' $, and let  the following relation be satisfied
	\begin{equation}\label{2.2}
		E(\Delta |  \textbf{X})=\lambda(g(W)+R)\quad a.s.,
	\end{equation} for some constant $ \lambda\in(0,1)$ and a random variable R. Assume that $ g(x) $, $ x\in\mathbb{ R} $, satisfies (A1) $ \sim $(A4) and $ Eg^2(W)< \infty $. 
	Then, for any $ z\in\mathbb{ R} $,
	\begin{align}\label{2.3}
		\big| P(W\leq z)-F(z)\big|\leq\dfrac{C}{1+|g(z)|}\Big\{ \sqrt{E\big|(1-\dfrac{1}{2\lambda}E(\Delta^{2}| \textbf{X})) \big|^{2} } 
		+\frac{1}{\lambda}\sqrt{E| E(\Delta\Delta^*| \textbf{X})|^{2}}+E| R|\Big\}.
	\end{align}       	
	Here C is a constant depending on $ \tau $ and $ Eg^2(W) $, $ \Delta^* $ is a random variable such that $ \Delta^*(W,W')=\Delta^*(W',W) $ and $ \Delta^*\geq |\Delta| $ a.s..
\end{theorem}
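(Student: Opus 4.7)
The plan is to mirror the Shao--Zhang route to the uniform bound, but at each step factor out the extra decay $1/(1+|g(z)|)$ by splitting expectations over a ``good'' event where $f_z$ is small and a ``bad'' event whose probability is controlled by a second-moment tail estimate. This will explain why the first two summands in \eqref{2.3} appear as $L^2$ norms (square roots) while the $R$ term does not.

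\textbf{Main decomposition.} I would start from the Stein identity $P(W\le z)-F(z) = E f_z'(W) - E[g(W)f_z(W)]$. Using exchangeability of $(W,W')$ together with antisymmetry, $0 = E\bigl[\Delta(f_z(W)+f_z(W'))\bigr]$, hence $2E[\Delta f_z(W)] = E[\Delta(f_z(W)-f_z(W'))]$. Conditioning on $\mathbf{X}$ and invoking \eqref{2.2} yields
\begin{equation*}
E[g(W)f_z(W)] = \tfrac{1}{2\lambda}E\bigl[\Delta(f_z(W)-f_z(W'))\bigr] - E[R\,f_z(W)].
\end{equation*}
Writing $f_z(W)-f_z(W') = \Delta f_z'(W) + \int_{W'}^{W}(f_z'(t)-f_z'(W))\,dt$ and using the Stein equation to replace $f_z'(t)-f_z'(W) = g(t)f_z(t)-g(W)f_z(W) - [I(W\le z)-I(t\le z)]$, I obtain a decomposition
\begin{equation*}
P(W\le z)-F(z) = T_1 + T_2 + T_3 + T_4,
\end{equation*}
with $T_1 = E\bigl[f_z'(W)\bigl(1-\tfrac{1}{2\lambda}E(\Delta^2|\mathbf{X})\bigr)\bigr]$, $T_2$ the ``smooth remainder'' containing $g f_z$, $T_3$ the indicator contribution controlled by $\Delta^*$, and $T_4 = E[R f_z(W)]$.

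\textbf{Non-uniform Stein bounds.} The pivotal step is to upgrade properties (B1)--(B4) to the non-uniform estimates $(1+|g(z)|)|f_z(x)|\le C$ and $(1+|g(z)|)|g(x)f_z(x)|\le C$ for all $x$ in the appropriate half-line relative to $z$, plus a matching bound for the off-diagonal difference that appears in $T_2$. I would obtain these by direct manipulation of the explicit formulas for $f_z$ in the two regions $x\le z$ and $x>z$, using $p(x)=c_1 e^{-G(x)}$ together with the monotonicity and convexity assumptions (A1)--(A2). Condition (A4), with the rescaling $g(x)/g(\tau x)\le K_\tau$, is exactly what lets me trade $|g(x)|$ for $|g(z)|$ in the tail and close the estimate without stronger moment conditions.

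\textbf{Extracting the $1/(1+|g(z)|)$ factor via Cauchy--Schwarz.} For $T_4$, the uniform non-uniform bound on $f_z$ directly gives $|T_4|\le CE|R|/(1+|g(z)|)$, with no square root. For $T_1$, since $f_z'(x)=g(x)f_z(x)+I(x\le z)-F(z)$ is only bounded by $1$ uniformly, I split on the event $A = \{|g(W)|\le |g(z)|/(2K_\tau)\}$. On $A$ the non-uniform estimates yield $|f_z'(W)|\le C/(1+|g(z)|)$, and bounding $|T_1 \mathbf{1}_A|$ by this pointwise estimate times $E|1-\tfrac{1}{2\lambda}E(\Delta^2|\mathbf{X})|$ is harmless. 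On $A^c$, Markov's inequality and $Eg^2(W)<\infty$ give $P(A^c)\le C/(1+g(z)^2)$, and Cauchy--Schwarz produces $|T_1 \mathbf{1}_{A^c}|\le \sqrt{E|1-\tfrac{1}{2\lambda}E(\Delta^2|\mathbf{X})|^2}\cdot\sqrt{P(A^c)}$, which accounts for both the $\sqrt{\,\cdot\,}$ and the $1/(1+|g(z)|)$. The same good/bad split bounds $T_2$; $T_3$ is handled by noting that $|I(t\le z)-I(W\le z)|\le \mathbf{1}_{\{|W-z|\le|\Delta|\}}$ with $|\Delta|\le\Delta^*$, so after conditioning one recovers $\sqrt{E|E(\Delta\Delta^*|\mathbf{X})|^2}$ times the same $A$/$A^c$-dichotomy factor $1/(1+|g(z)|)$.

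\textbf{Main obstacle.} The nontrivial work is in Step 2, namely establishing the non-uniform Stein estimates for the general density $p$ without falling back on a concentration inequality. Specifically, one must obtain $|gf_z|\le C/(1+|g(z)|)$ when $|g(x)|$ is comparable to $|g(z)|$ (the ``near-$z$'' regime), and then use (A4) to rescale the bound from $g(x)$ to $g(\tau x)$. Handling the discontinuity of $f_z'$ at $x=z$ so that $T_3$ produces $\Delta^*$ rather than a concentration-type event is the other delicate point: keeping the bounds sharp enough for the constant $C$ to depend only on $\tau$ and $Eg^2(W)$, as asserted, is where the argument must be most careful.
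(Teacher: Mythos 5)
Your overall architecture coincides with the paper's: the same exchangeability identity, the same four-term decomposition (your $T_1,\dots,T_4$ match the paper's first term, $J_1$, $J_2$ and the $R$-term), Cauchy--Schwarz to produce the $L^2$ norms, Markov's inequality with $Eg^2(W)$ plus condition (A4) to produce the $1/(1+|g(z)|)$ decay, and the explicit formula for $f_z$ together with $1-F(z)\le p(z)/g(z)$ in the near-$z$ regime. Your good/bad split on $A=\{|g(W)|\le |g(z)|/(2K_{\tau})\}$ is only a reorganization of the paper's computation: since $g$ is non-decreasing and $g(\tau z)\ge g(z)/K_{\tau}$, your event $A$ is contained in $\{W\le \tau z\}$ for $z>0$, which is exactly the region decomposition the paper uses to bound $E|f_z'(W)|^2$ before applying Cauchy--Schwarz globally; both orderings yield the same bound, including the mix of an $L^1$ term on the good set and an $L^2$ term on the bad set, both dominated by $\sqrt{E|\cdot|^2}$.

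There are, however, two places where your sketch would not close as written. First, for $T_3$ you propose the pointwise bound $|I(W+t\le z)-I(W\le z)|\le I(|W-z|\le|\Delta|)$. This is correct but discards the sign structure, and the resulting quantity $E\big[\Delta\Delta^*\, I(|W-z|\le|\Delta|)\big]$ is a concentration-type object: to extract $1/(1+|g(z)|)$ from it you must either bound $P(|W-z|\le|\Delta|)$ --- precisely the concentration inequality this paper is built to avoid --- or split on $\{|\Delta|>(1-\tau)|z|\}$, which introduces higher moments of $\Delta$ that do not appear in (\ref{2.3}). The paper instead imports the one-sided Shao--Zhang estimate $|J_2|\le\frac{1}{2\lambda}E\Delta\Delta^* I(W>z)$, after which Cauchy--Schwarz and $P(W>z)\le Eg^2(W)/g^2(z)$ suffice. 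Second, you do not treat $z\le 0$. There the event $\{W>z\}$ has probability near $1$, so the above gives no decay; the paper's fix is the identity $E\Delta\Delta^*=0$ (from exchangeability and the symmetry $\Delta^*(W,W')=\Delta^*(W',W)$), which allows $I(W>z)$ to be replaced by $I(W'\le z)$, whose probability is then controlled by Markov's inequality. Without this symmetrization the indicator term is not controlled on the negative half-line, and your argument as stated only proves the bound for $z>0$.
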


\begin{remark}
	Shao and Zhang \cite{sz} provided the Berry-Esseen bound for non-normal approximation similar to (\ref{1.1}). Theorem \ref{mr} is a non-uniform refinement of their result.
\end{remark}
\begin{remark}
	Let $ W=\sum\limits_{i=1}^n X_i $, where $ \{X_i, i=1,...n\} $ are independent random variables with zero mean and $ EW^2=1 $. Our general result (\ref{2.3}) cannot directly cover the following classical result in Chen and Shao \cite{cls1}:  there is an absolute constant C such that for any $ z\in\mathbb{ R} $,
	\begin{equation}\label{2.4}
		\Big|P(W\leq z)-\Phi(z)\Big|\leq C \sum\limits_{i=1}^n\Big\{ \frac{EX_i^2I(|X_i|>1+|z|)}{(1+|z|)^2} +\frac{E|X_i|^3I(|X_i|\leq 1+|z|)}{(1+|z|)^3}   \Big\}.
	\end{equation}
	In the above, $ \Phi(z) $, $ z\in\mathbb{ R} $ is the standard normal distribution. However, the technique "leave one out" to deal with sums of independent variables is very similar to the exchangeable pair technique. If we begin with (\ref{B}) (see in the proof of the main result)  and use some results from Chen and Shao \cite{cls1}, it is not difficult to obtain (\ref{2.4}) . In some applications such as the quadratic forms treated later in this paper, where  relation (\ref{2.2}) is satisfied with $ R=0 $ and $ g(x)=x $,  the non-uniform part $ \frac{C}{1+|z|} $ in (\ref{2.3}) can be improved significantly by replacing it with $ \frac{C}{(1+|z|)^2} $.
\end{remark}

\section{Proof of Theorem  \ref{mr}.}

In what follows, C is used to denote a constant whose value may change at each occurrence. 

Since  $ (W,W') $ and $ (W',W) $ have the same distribution and $ E(\Delta|\textbf{X})=\lambda(g(W)+R) $,  following the same arguments as in Shao and Zhang \cite{sz}, we obtain
\begin{align*}
	0&=E(W-W')(f_z(W)+f_z(W')\\
	&=E(W-W')(2f_z(W)+f_z(W')-f_z(W))\\
	&=2\lambda E(g(W)f_z(W))+2\lambda Ef_z(W)R- E\Delta\int^0_{-\Delta}f_z'(W+t)dt.
\end{align*}
Thus
$Eg(W)f_z(W)=\frac{1}{2\lambda}E\Delta\int^0_{-\Delta}f_z'(W+t)dt-Ef_z(W)R. $ 
Then
\begin{align*}
	&Ef_z'(W)-Eg(W)f_z(W)\\
	&=Ef_z'(W)-\frac{1}{2\lambda}E\Delta\int^0_{\Delta}f_z'(W+t)dt+Ef_z(W)R\\
	&=Ef_z'(W)\Big(1-\frac{1}{2\lambda}\Delta^2\Big)-\frac{1}{2\lambda}E\Delta\int^0_{-\Delta}f_z'(W+t)-f_z'(W)dt+Ef_z(W)R\\
	&=Ef_z'(W)\Big(1-\frac{1}{2\lambda}E(\Delta^2|\textbf{X})\Big)-\frac{1}{2\lambda}E\Delta\int^0_{-\Delta}f_z'(W+t)-f_z'(W)dt+Ef_z(W)R.\\
\end{align*}
With the notation $ J=\frac{1}{2\lambda}E\Delta\int^0_{-\Delta}f_z^{'}(W+t)-f_z^{'}(W)dt $, we find that
\begin{align*}
	J&=\frac{1}{2\lambda}E\Big(\Delta\int^0_{-\Delta}g(W+t)f_z(W+t)-g(W)f_z(W)dt \Big)\\
	&+\frac{1}{2\lambda}E\Big(\Delta\int^0_{-\Delta}I(W+t\leq z)-I(W\leq z)dt\Big)\\
	&=J_{1}+J_{2},
\end{align*}
where 
\begin{align*}
	&J_{1}=\frac{1}{2\lambda}E\Big(\Delta\int^0_{-\Delta}g(W+t)f_z(W+t)-g(W)f_z(W)dt \Big),\\
	&J_{2}=\frac{1}{2\lambda}E\Big(\Delta\int^0_{-\Delta}I(W+t\leq z)-I(W\leq z)dt\Big).
\end{align*}
From Shao and Zhang \cite{sz}, it is known that
\begin{align}\label{I1}
	|J_{1}|\leq\frac{1}{2\lambda}E\Delta\Delta^*g(W)f_z(W)	
\end{align}	
and
\begin{align}\label{I21}
	|J_{2}|\leq\frac{1}{2\lambda}E\Delta\Delta^*I(W>z).
\end{align}
Observe that
\begin{align*}
	E\Delta\Delta^*=0.
\end{align*}
Then we obtain
\begin{align}\label{I22}
	|J_{2}|\leq\frac{1}{2\lambda}E\Delta\Delta^*\big(I(W>z)-1\big)=\frac{1}{2\lambda}E\Delta\Delta^*I(W'\leq z).
\end{align}
Combining (\ref{I1}) and (\ref{I21}), for $ z> 0 $, we have 
\begin{align}\label{B}
	&\big| P(W\leq z)-F(z) \big | \notag\\
	&\leq E\Big|f_z^{'}(W)\Big(1-\dfrac{1}{2\lambda}E(\Delta^{2}|\textbf{X})\Big) \Big|
	+\dfrac{1}{2\lambda}E\Big| g(W)f_z(W)E(\Delta\Delta^*|\textbf{X})\Big| \notag\\
	& 	+\dfrac{1}{2\lambda}E\big| E(\Delta\Delta^*|\textbf{X})I(W> z)\big|+E| f_z(W)R|.
\end{align}
For $ z\leq 0 $, using (\ref{I1}) and (\ref{I22}), we have
\begin{align}\label{C}
	&\big| P(W\leq z)-F(z) \big | \notag\\
	&\leq E\Big|f_z^{'}(W)\Big(1-\dfrac{1}{2\lambda}E(\Delta^{2}|\textbf{X})\Big) \Big|
	+\dfrac{1}{2\lambda}E\Big| g(W)f_z(W)E(\Delta\Delta^*|\textbf{X})\Big| \notag\\
	& 	+\dfrac{1}{2\lambda}E\big| E(\Delta\Delta^*|\textbf{X})I(W'\leq z)\big|+E| f_z(W)R|.
\end{align}
The only difference between (\ref{B}) and (\ref{C}) is that the expectation $ E\big| E(\Delta\Delta^*|\textbf{X})I(W> z)\big| $ is replaced by $ E\big| E(\Delta\Delta^*|\textbf{X})I(W'\leq z)\big| $.\\
To prove (\ref{2.3}), we first assume that $ z> 0 $.\\
Cauchy's inequality applied to the fist term of (\ref{B}) yields
\begin{equation}\label{B1a}
	E\Big|f_z'(W)\Big(1-\dfrac{1}{2\lambda}E(\Delta^{2}|\textbf{X})\Big) \Big|\leq 
	\sqrt{E| f_z^{'}(W)|^{2}}\cdot
	\sqrt{E\Big|(1-\dfrac{1}{2\lambda}E(\Delta^{2}\mid \textbf{X})) \Big|^{2} }.
\end{equation}
We will show   now that
\begin{equation}\label{B1b}
	\sqrt{E| f_z'(W)|^{2}}\leq\dfrac{C}{1+| g(z)|} .
\end{equation}
Indeed, for any $ \tau\in(0,1) $, we have 
\begin{equation*}
	E| f_z'(W)|^2=E| f_z'(W)|^2 I(W\leq0)+E|f_z'(W)|^2 I(0<W\leq\tau z)+E| f_z'(W)|^2 I(W>\tau z).
\end{equation*}
Recall that for $ x\leq 0 $, 
\begin{equation*}
	f'_z(x)=g(x)f_z(x)+1-F(z)=\Big(\frac{F(x)g(x)}{p(x)}+1\Big)\cdot(1-F(z)) .
\end{equation*} 
Because $ g(x)f_z(x) $ is increasing in $ x\in(-\infty,0) $ and for fixed $ z $, $ F(z)-1\leq g(x)f_z(x)\leq F(z) $, we see that
\begin{equation*}
	-1\leq\frac{F(x)g(x)}{p(x)}\leq\frac{F(0)g(0)}{p(0)}=0 .
\end{equation*}
Thus we conclude that $ \frac{F(x)g(x)}{p(x)}+1 $ is bounded on $ (-\infty,0) $ and it does not  depend on z. We notice further that 
\begin{equation}\label{a1}
	1-F(z)=\int_{z}^{\infty}p(y)dy\leq \int_{z}^{\infty}\frac{g(y)}{g(z)}p(y)dy\leq \frac{p(z)}{g(z)}.
\end{equation} Hence
\begin{equation}\label{a2}
	E| f_z'(W)|^2 I(W\leq0) \leq C(1-F(z))^2\leq C\left(\dfrac{p(z)}{g(z)}\right) ^2\leq\dfrac{C}{g^2(z)}. 
\end{equation} 
By (\ref{a1}), we have
\begin{align*}
	E| f_z'(W)|^2 I(0<W\leq\tau z)&= EI(0<W\leq\tau z)\cdot(1+\frac{1}{c_1}F(W)\cdot g(W)e^{G(W)})\cdot(1-F(z))^2\\
	&\leq C\Big(1+g(\tau z)exp\Big({\int^{\tau z}_0g(y)dy}\Big)\Big)^2\cdot exp\Big(-2\int^z_0g(y)dy\Big)\cdot\frac{1}{g^2(z)}.
\end{align*}
We notice that 	
\begin{align*}
	g(\tau z)exp\left({\int^{\tau z}_0g(y)dy}\right)\cdot exp\left({-\int^z_0g(y)dy}\right)&=g(\tau z)exp\left({-\int^{z}_{\tau z}g(y)dy}\right)\\
	&\leq g(\tau z)e^{-(1-\tau)zg(\tau z)}\\
	&\leq C .
\end{align*}
Therefore $E| f_z'(W)|^2 I(0<W\leq\tau z)	\leq	\dfrac{C}{g^2(z)} $ and C depends on $ \tau. $\\
For the  term $  E| f_z'(W)|^2 I(W>\tau z)$, by (B3) and (A4), we find, by Markov's inequality, that
\begin{align*}
	E| f_z'(W)|^2 I(W>\tau z)&\leq P(W>\tau z)\\
	&\leq\dfrac{Eg^2(W)}{g^2(\tau z)}\\
	&\leq \frac{C}{g^2(\tau z)}=\frac{g^2(z)}{g^2(\tau z)}\cdot\frac{C}{g^2(z)}\\
	&\leq \frac{K^2_{\tau}\cdot C}{g^2(z)}.
\end{align*}
Thus $ E| f_z'(W)|^2\leq \frac{C}{g^2(z)}$ for $ z> 0 $ with a constant C depending on $ \tau $ and $ Eg^2(W) $.\\    
The next is to use the fact that $ \rVert f^{'}_z\rVert \leq 1 $ and see that   $$ \sqrt{E\mid f_z'(W)\mid^{2}}\leq \min\Big\{1,~\dfrac{C}{|g(z)|}\Big\}\le  \dfrac{C}{1+| g(z)|} ,$$
which complete the proof of (\ref{B1b})  for $ z>0 $. By (\ref{B1a}) and (\ref{B1b}), we have
\begin{equation}\label{B1c}
	E\Big|f_z'(W)\Big(1-\dfrac{1}{2\lambda}E(\Delta^{2}|\textbf{X})\Big) \Big|\leq 
	\dfrac{C}{1+|g(z)|}\cdot
	\sqrt{E\Big|(1-\dfrac{1}{2\lambda}E(\Delta^{2}\mid \textbf{X})) \Big|^{2} }.
\end{equation}\\
Using Cauchy's inequality, for the second term of (\ref{B}), we find 
\begin{equation}\label{B2a}
	\dfrac{1}{2\lambda}E\Big| g(W)f_z(W)E(\Delta\Delta^*|\textbf{X})\Big|\leq
	\sqrt{E|g(W)f_z(W)|^{2}}\cdot
	\frac{1}{2\lambda}\sqrt{E| E(\Delta\Delta^*\mid  \textbf{X})|^{2}}.
\end{equation}
We will show that
\begin{equation}\label{B2b}
	\sqrt{E|g(W)f_z(W)|^{2}}\leq\dfrac{C}{1+|g(z)|}.
\end{equation}
Since we know that $ g(x)f_z(x)=f_z'(x)-\big(I(x\leq z)-F(z)\big) $ , $ \rVert gf_z\rVert\leq 1 $ and $ E| f_z'(W)|^2\leq \frac{C}{g^2(z)}$, we only need to show  that $ E\big(I(W\leq z)-F(z)\big)^2\leq \frac{C}{g^2(z)} $.\\
For $ z>0 $,
\begin{align*}
	E\big(I(W\leq z)-F(z)\big)^2=E\big(1-F(z)\big)^2I(W\leq z)+F^2(z)I(W>z)\leq \frac{C}{g^2(z)}.
\end{align*}
Thus we have proved (\ref{B2a}) for $ z> 0 $. By (\ref{B2a}) and (\ref{B2b}), we have
\begin{equation}\label{B2c}
	\dfrac{1}{2\lambda}E\Big| g(W)f_z(W)E(\Delta\Delta^*|\textbf{X})\Big|\leq
	\dfrac{C}{1+|g(z)|}\cdot
	\frac{1}{2\lambda}\sqrt{E| E(\Delta\Delta^*\mid  \textbf{X})|^{2}}.
\end{equation}\\
For the third term of (\ref{B}), we obtain
\begin{equation}\label{B3a}
	\dfrac{1}{2\lambda}E\big| E(\Delta\Delta^*|\textbf{X})I(W> z)\big|\leq\sqrt{P(W> z)}\sqrt{\dfrac{1}{2\lambda}E\big| E(\Delta\Delta^*|\textbf{X})\big|^2}.
\end{equation}
By Markov's inequality,
\begin{align*}\label{4.13}
	\notag P(W> z)&\leq\dfrac{Eg^2(W)}{g^2( z)}\\
	&\leq  \frac{C}{g^2(z)}.
\end{align*}
Then , (\ref{B3a}) becomes
\begin{equation}\label{B3b}
	\dfrac{1}{2\lambda}E\big| E(\Delta\Delta^*|\textbf{X})I(W> z)\big|\leq\dfrac{C}{1+|g(z)|}\sqrt{\dfrac{1}{2\lambda}E\big| E(\Delta\Delta^*|\textbf{X})\big|^2}.
\end{equation}
From Shao, Zhang and Zhang \cite{szz}, we know that
\begin{equation}\label{B4a}
	\Vert f_z\Vert\leq \min\Big\{ \frac{1}{c_1}, ~\frac{1}{|g(z)|}\Big\}
\end{equation}
for $ z\in\mathbb{ R} $. For the last term of ( \ref{B}), we have
\begin{equation}\label{B4b}
	E|f_z(W)R|\leq \dfrac{C}{1+|g(z)|}E|R|.
\end{equation}
From (\ref{B}), (\ref{B1c}), (\ref{B2c}), (\ref{B3b}) and (\ref{B4b}), it follows that we have proved (\ref{2.3}) for $ z> 0 $.\\
For  $ z\leq0 $, we take (\ref{C}) and use Cauchy's inequality. For the third term of ( \ref{C}), it is easy to see that
\begin{align}\label{C1}
	\notag\dfrac{1}{2\lambda}E\big| E(\Delta\Delta^*|\textbf{X})I(W'\leq z)\big|&\leq\sqrt{P(W'\leq z)}\sqrt{\dfrac{1}{2\lambda}E\big| E(\Delta\Delta^*|\textbf{X})\big|^2}
	\\
	&\leq\dfrac{C}{1+|g(z)|}\sqrt{\dfrac{1}{2\lambda}E\big| E(\Delta\Delta^*|\textbf{X})\big|^2}.
\end{align}
For the last term of (\ref{C}), in view of  (\ref{B4a}),
\begin{equation}\label{C2}
	E|f_z(W)R|\leq \dfrac{C}{1+|g(z)|}E|R|.
\end{equation}
Thus we only need to prove (\ref{B1b}) and  (\ref{B2b}) for $ z\leq 0 $.\\
For $ z\leq0 $, we have, for any $ \tau\in(0,1) $, that
\begin{equation*}
	E| f_z'(W)|^2=E| f_z'(W)|^2 I(W\leq\tau z)+E| f_z'(W)|^2 I(\tau z\leq W\leq 0)+E| f_z'(W)|^2 I(W>0).
\end{equation*} 
By the same arguments as above,  we obtain
\begin{equation}\label{C3}
	F(z)\leq\frac{p(z)}{|g(z)|},\quad z\leq 0.
\end{equation}
Then following  similar steps as in the proof for $ z\geq 0 $, we establish (\ref{B1b}) for $ z\leq 0$. To prove (\ref{B2b}) for $ z\leq 0 $, it suffices to show that $ E\big(I(W\leq z)-F(z)\big)^2\leq C/|g(z)|^2 $ for $ z\leq0 $. Indeed,  by (\ref{C3}) and Markov's inequality,
\begin{align*}
	E\big(I(W\leq z)-F(z)\big)^2\leq 2P(W\leq z)+2F^2(z)\leq\frac{C}{|g(z)|^2}.
\end{align*}
Let us summarize our findings:(\ref{B}), (\ref{B1c}), (\ref{B2c}), (\ref{B3b}) and (\ref{B4b})  show that the bound (\ref{2.3}) is true for $ z>0 $, while (\ref{B1b}), (\ref{B2b}) proved for $ z\leq 0 $, and (\ref{C2}), (\ref{C3}) show that this bound holds for $ z\leq 0 $.

Theorem \ref{mr} is proved.

\section{Applications}
\subsection{Quadratic forms} 

Let $ X_1,X_2\cdots,X_n $ be i.i.d. random variables with  zero mean, unit variance and a finite fourth moment. Let $ A=(a_{ij})_{1\le i, j\le n} $ be a real symmetric matrix with $ a_{ii}=0 $ and let 
\begin{align*}
	W_n=\dfrac{1}{\sigma_n}\sum_{i\neq j}a_{ij}X_{i}X_{j},~~ \sigma^2_n=2\sum_{i=1}^{n}\sum_{j=1}^{n}a^2_{ij}.
\end{align*}This is a classical example which has been widely discussed in the literature. For example, de Jong \cite{dj} obtained the asymptotic normality of $ W_n $,  Chatterjee \cite{c1} gave an $ L^1 $ bound and  
G\"{o}tze and Tikhomirov \cite{gt} studied the Kolmogorov distance between the distribution  of $ W_n $ and the distribution of the same quadratic forms with $ X_{ij} $ repalced by corresponding Gaussian random variables. Shao and Zhang \cite{sz} established the following bound:
\begin{equation*}
	\sup\limits_{z\in\mathbb{ R}}\big| P(W_n\leq z)-\Phi(z) \big|\leq 
	\frac{CEX_1^4}{\sigma_n^2}
	\left(\sqrt{\sum\limits_i\Big(\sum\limits_j a_{ij}^2\Big)^2}+\sqrt{\sum\limits_{i,j}\Big(\sum\limits_k (a_{ik} a_{jk})^2\Big)}\right).
\end{equation*} 
The next theorem is a non-uniform refinement of this bound.
\begin{theorem}\label{th4.1}
	Let $ \{X_1,X_2,...,X_n \}$ be i.i.d random variables with zero mean, unit variance and a finite fourth moment. Let $ A=(a_{ij})^n_{i,j=1} $ be a real symmetric matrix with $ a_{ii}=0 $ for all $ 1\leq i\leq n $.  Put $ W_n=\dfrac{1}{\sigma_n}\sum_{i\neq j}a_{ij}X_{i}X_{j} $ and $ \sigma^2_n=2\sum_{i=1}^{n}\sum_{j=1}^{n}a^2_{ij} $. Then,
	\begin{equation}\label{3.1}
		\big| P(W_n\leq z)-\Phi(z) \big|\leq\dfrac{CEX^4_1}{(1+| z|)^2\sigma_n^2}\left(\sqrt{\sum\limits_i\Big(\sum\limits_j a_{ij}^2\Big)^2}+\sqrt{\sum\limits_{i,j}\Big(\sum\limits_k (a_{ik} a_{jk})^2\Big)}\right),
	\end{equation}
	where C is an absolute constant depending on $ EX_1^4 $.
\end{theorem}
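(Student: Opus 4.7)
The plan is to combine (a refinement of) Theorem \ref{mr} with the standard exchangeable-pair construction for quadratic forms. Let $(X_1',\ldots,X_n')$ be an independent copy of $(X_1,\ldots,X_n)$ and let $I$ be uniform on $\{1,\ldots,n\}$, independent of everything else; let $W_n'$ be obtained from $W_n$ by replacing $X_I$ with $X_I'$. The symmetry of $A$ and $a_{ii}=0$ give
\[
\Delta = W_n - W_n' = \frac{2}{\sigma_n}(X_I-X_I')\sum_{j\neq I}a_{Ij}X_j,\qquad E(\Delta \mid \mathbf{X}) = \frac{2}{n}W_n,
\]
so (\ref{2.2}) is satisfied with $g(x)=x$, $\lambda=2/n$ and $R=0$. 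A symmetric $\Delta^*$ dominating $|\Delta|$ is $\Delta^* = \frac{2}{\sigma_n}(|X_I|+|X_I'|)\bigl|\sum_{j\neq I}a_{Ij}X_j\bigr|$.

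Next, I would invoke the strengthening of Theorem \ref{mr} flagged in its second remark: when $R=0$ and $g(x)=x$, the non-uniform factor $1/(1+|z|)$ in (\ref{2.3}) can be upgraded to $1/(1+|z|)^2$. Granting this, the proof reduces to estimating the two quantities $E\bigl|1 - \tfrac{1}{2\lambda}E(\Delta^2|\mathbf{X})\bigr|^2$ and $\lambda^{-2}E|E(\Delta\Delta^*|\mathbf{X})|^2$. A direct computation gives
\[
\tfrac{1}{2\lambda}E(\Delta^2 \mid \mathbf{X}) = \frac{1}{\sigma_n^2}\sum_i(X_i^2+1)\Bigl(\sum_{j\neq i}a_{ij}X_j\Bigr)^2,
\]
whose expectation is $1$ since $\sigma_n^2 = 2\sum_{i,j}a_{ij}^2$. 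Its variance is a standard polynomial in i.i.d., mean-zero, unit-variance variables with finite fourth moment, and its dominant contribution is bounded by $CEX_1^4 \cdot \sigma_n^{-4}\sum_i(\sum_j a_{ij}^2)^2$, which produces the first square-root term in (\ref{3.1}).

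For the second quantity, integrating out $X_I'$ at fixed $I$ yields
\[
E(\Delta\Delta^* \mid \mathbf{X},I) = \frac{4}{\sigma_n^2}\bigl(X_I|X_I| + (E|X_1|)\,X_I - E[X_1\cdot|X_1|]\bigr)\,S_I\,|S_I|,
\]
with $S_I = \sum_{j\neq I}a_{Ij}X_j$. Averaging over $I$, squaring, and using independence of the $X_k$'s to kill cross-terms whose indices do not match, one finds the leading contribution bounded by $CEX_1^4 \cdot \sigma_n^{-4}\sum_{i,j}\sum_k(a_{ik}a_{jk})^2$, which is the second square-root term.

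The main obstacle is the squared-in-$z$ refinement of Theorem \ref{mr}. In that proof the factor $1/(1+|z|)$ arose from bounding $\sqrt{E|f_z'(W)|^2}$ by $C/(1+|g(z)|)$ via Cauchy-Schwarz. To gain an additional factor of $1/(1+|z|)$ in the Gaussian, $R=0$ setting, I would revisit (\ref{B}) directly, splitting each term on $\{|W_n|\leq \tau|z|\}$ versus its complement. On the first region the explicit Gaussian formulas for $f_z$, $f_z'$ and $gf_z$ give super-polynomial decay in $|z|$; on the second region one applies Markov's inequality with the uniformly bounded fourth moment $EW_n^4 \leq C$ available for quadratic forms, yielding $P(|W_n|>\tau|z|)\leq C/(1+|z|)^4$, whose square root $C/(1+|z|)^2$ provides the required factor after Cauchy-Schwarz. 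Combined with the two moment estimates above, this produces (\ref{3.1}).
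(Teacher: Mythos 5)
Your proposal is correct and follows essentially the same route as the paper: the same exchangeable pair with $g(x)=x$, $\lambda=2/n$, $R=0$, and the same upgrade of the non-uniform factor to $1/(1+|z|)^2$ by splitting $E|f_z'(W_n)|^2$ on $\{|W_n|\le\tau |z|\}$ versus its complement and applying Markov's inequality with $EW_n^4\le C$. The only cosmetic difference is that you sketch the two conditional-moment estimates directly (with a slightly larger choice of $\Delta^*$), whereas the paper imports them from Shao and Zhang.
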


\begin{proof}
	
	Let $ (X'_1,X'_2,...,X'_n) $ be an independent copy of $ (X_1,X_2,...,X_n) $ and $ \theta$ a disrete uniformly distributed random variable over the set $ \{1,2,...,n\} $ and independent of all  oher random variables.  Define 
	\begin{equation*}
		W'_n=W_n-\frac{2}{\sigma_n}\sum_{j=1}^{n}a_{\theta j}X_\theta X_j+ \frac{2}{\sigma_n}\sum_{j=1}^{n}a_{\theta j}X'_{\theta}X_j.
	\end{equation*}Then $ (W,W') $ is an exhcangeable pair. It is easy to see that
	\begin{equation*}
		\Delta=W_n-W_n'=\frac{2}{\sigma_n}\sum\limits_{i=1}^nI\{\theta=i\}\sum\limits^n_{j=i}a_{ij}X_j(X_i-X'_i)
	\end{equation*}
	and 
	\begin{equation*}
		E(\Delta|\textbf{X})=\frac{2}{n}W_n.
	\end{equation*}
	These relations imply that condition (\ref{2.2}) is satisfied with $ g(x)=x$, $ \lambda=\frac{2}{n} $ and $ R=0 $. 
	By Shao and Zhang \cite{sz},
	\begin{equation}\label{q1}
		E\Big|1-\frac{1}{2\lambda}E(\Delta^2|\textbf{X})\Big|^2 \leq C\sigma_n^{-4}\Big(E(X_1^4)\Big)^2\left(\sum\limits^n_{i=1}\Big(\sum\limits^n_{j=1} a_{ij}^2\Big)^2+\sum\limits^n\limits_{i,j=1}\Big(\sum\limits^n\limits_{k=1} (a_{ik} a_{jk})^2\Big)\right)
	\end{equation}
	and
	\begin{equation}\label{q2}
		Var\Big(\frac{1}{\lambda}E(\Delta|\Delta||\textbf{X})\Big)\leq C\sigma^{-4}_nE^2(X_1^4)\sum\limits_i^n\Big(\sum\limits_j^n a^2_{ij}\Big)^2  .
	\end{equation}
	
	Note that  $ EX_1^4<\infty $ and  $ EW_n^4<C $ for any $ n=1,2 \cdots.$ Then, 
	\begin{equation*}
		P(|W_n|>z)\leq \frac{EW_n^4}{z^4}\land 1=\min\{1,~C/z^4\}\leq \frac{C}{(1+|z|)^4}.
	\end{equation*} 
	For  $ \tau $ involved in (A4), we can take, for example  $ \tau=\frac{1}{2} $ and derive that 
	\begin{align*}
		E| f_z'(W_n)|^2 I(0<W\leq\frac{1}{2} z)&\leq \Big[C\big(1+ze^{\int^{ z/2}_0 ydy}\big)^2\cdot e^{-2\int^z_0ydy}\cdot \frac{1}{z^2}\Big]\land 1\\
		&\leq \Big[C\big( e^{-z^2}/z^2+ e^{-3z^2/4}\big)\Big]\land 1\\
		&=\Big[C \big(z^2e^{-z^2}/z^4 + z^4e^{-3z^2/4}/z^4 \big)\Big]\land 1\\
		&\leq\min\{1, ~C/z^4\}\\
		&\leq \frac{C}{(1+|z|)^4}.
	\end{align*}
	By( $ \ref{a2} $),  we have
	\begin{align*}
		E|f'_z(W_n)|^2I(W\leq 0)\leq C(\frac{p(z)}{g(z)})^2=Ce^{-z^2/2}/z^2=Cz^2e^{-z^2/2}/z^4\leq C/z^4.
	\end{align*}
	Then, 
	\begin{equation*}
		E|f'_z(W_n)|^2\leq \frac{C}{(1+|z|)^4}.
	\end{equation*}
	Using the same arguments as those in the proof of the main result, we find that
	\begin{align*}
		\sqrt{E|f'_z(W_n)|^{2}}\leq\dfrac{C}{(1+| z|)^2},\\
		\sqrt{E|W_nf_z(W_n)|^{2}}\leq\dfrac{C}{(1+|z|)^2}.
	\end{align*}
	Hence the bound $ \frac{C}{1+|z|} $ in (\ref{2.3}) can be improved replacing it by $ \frac{C}{(1+|z|)^2} $. Thus, referring to Theorem $\ref{mr}$, in view of (\ref{q1}) and (\ref{q2}), we complete the proof of  this theorem.

\end{proof}

\subsection{General Curie-Weiss model}

The Curie-Weiss model is important in statistical physics and has been extensively discussed in the literature. For some history and the first asymptotic results, the reader is referred to Ellis and Newman \cite{e1}, \cite{e2}.  Using exchangeable pairs,  Chatterjee and Shao \cite{cs} studied  a kind of Curie-Weiss model. Shao and Zhang \cite{sz} studied a general Curie-Weiss model and got the optimal convergence rate. In this subsection, we refine the bound in Shao and Zhang \cite{sz} to the non-uniform case.       

Let $ L(x) $,$ x\in\mathbb{ R} $, be a distribution function satisfying the conditions:

\begin{equation}\label{cw1}
	\int^{+\infty}_{-\infty}xdL(x)=0  \quad\text{ and } \quad \int^{+\infty}_{-\infty}x^2dL(x)=1 
\end{equation}
For a positive integer k and a real number $ \lambda_{\rho} $, say that $L$ is of type $k$ with strength $ \lambda_{\rho} $, if
\begin{equation*}
	\int^{+\infty}_{-\infty}x^{j}d\Phi(x)-\int^{+\infty}_{-\infty}x^jdL(x)=\begin{cases}
		0,  \quad\text{for}~~j=1,\cdots, 2k-1,\\
		\lambda_{\rho}, \quad\text{for}~~j=2k,
	\end{cases}
\end{equation*}
where, to recall that $ \Phi(x) $,$ x\in\mathbb{ R} $, is the standard normal distribution function. 

Let $ (X_1,\cdots,X_n) $ be a random vector with joint  distribution function $ P_{n,\beta}(\textbf{x}) $, $ \textbf{x}=(x_1,\cdots,x_n)\in\mathbb{ R}^n $ such that
\begin{equation}\label{cw2}
	dP_{n,\beta}(\textbf{x})=\frac{1}{K_n}\exp\Big(\dfrac{\beta(x_1+...+x_n)^2}{2n}\Big)\prod\limits^{n}_{i=1}dL(x_i) 
\end{equation}
where  $ K_n $ is the normalizing constant. Let $ \xi $ be a random variable with distribution function $ L $. Moreover, assume that:
\begin{enumerate}
	\item[(1)]  for $ 0<\beta<1 $, there exists a constant $ b>\beta $ such that
	\begin{equation}\label{cw3}
		Ee^{t\xi}\leq e^{t^2/2b}, ~~t\in\mathbb{ R}.
	\end{equation}
	\item[(2)]  for $ \beta=1 $, there exist constants $ b_0>0 $, $ b_1>0 $ and $ b_2>1 $ such that:
	\begin{equation}\label{cw4}
		Ee^{t\xi}\leq\begin{cases}
			\exp(t^2/2-b_1t^{2k}),\quad| t|\leq b_0,\\
			\exp(t^2/2b_2),\quad|t|>b_0.
		\end{cases}
	\end{equation}
\end{enumerate}              
We have the following results:
\begin{theorem}
	Suppose that  the distribution function of the random vector $ (X_1,X_2,\cdots,X_n) $ is given by (\ref{cw2}), where $ L $ satisfies (\ref{cw1}) and let $ S_n=X_1+\cdots+X_n $.\\ 
	$ (i) $ If $ 0<\beta<1 $ and (\ref{cw3}) is satisfied, $ W_n=S_n/\sqrt{n} $. Then
	\begin{equation}\label{cw5}
		\big|P(W_n\leq z)-F_1(z)\big|\leq\dfrac{C}{1+(1-\beta)| z|}\cdot\frac{1}{\sqrt{n}} ,
	\end{equation}
	where $ F_1(z) $, $ z\in\mathbb{ R} $, is the distribution function of  a random variable $ Z_1\sim $ $ \cal{N} $$(0,\frac{1}{1-\beta} )$ and C is a constant depending on $ b  $ and $ \beta $.\\
	$ (ii) $ If $ \beta=1,L $ is of type k, (\ref{cw4}) holds and $ W_n=S_n/n^{1-1/2k} $, then	
	\begin{equation}\label{cw6}
		\big| P(W_n\leq z)-F_k(z)\big|\leq\dfrac{C}{1+2kc_2|z|^{2k-1}}\cdot\frac{1}{n^{1/2k}},
	\end{equation}
	where C is a constant depending on $ b_0,b_1,b_2 $ and k. $ F_k(z) $, $ z\in\mathbb{ R} $, is the distribution function whose density function is $p_k(z)=c_1e^{-c_2y^{2k}}  $, $ c_2=\dfrac{H^{(2k)}(0)}{(2k)!} $ , $ c_1 $ is the normalizing constant, and 
	\begin{align*}
		H(s)=s^2/2-\ln(\int^{+\infty}_{-\infty}exp(sx)dH(x)),~s\in\mathbb{ R}.
	\end{align*}$   $. 	
\end{theorem}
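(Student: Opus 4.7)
The plan is to construct an exchangeable pair by the standard coordinate-resampling scheme used in \cite{sz}, verify the Stein identity (\ref{2.2}) with $g$ determined by the limiting density, and then invoke Theorem~\ref{mr} in place of the uniform bound (\ref{1.1}) of Theorem 1.1. Since Shao and Zhang \cite{sz} have already carried out the moment computations needed to bound the three quantities appearing on the right-hand side of (\ref{1.1}), the additional content here is to check conditions (A1)--(A4) for the two choices of $g$ arising in cases (i) and (ii) and to read off the non-uniform factor $1/(1+|g(z)|)$ from (\ref{2.3}).

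Concretely, let $I$ be uniformly distributed on $\{1,\ldots,n\}$ and independent of $\textbf{X}$. Given $\textbf{X}$, draw $X_I'$ from the $P_{n,\beta}$-conditional law of $X_I$ given $\textbf{X}_{-I}$, put $X_j'=X_j$ for $j\neq I$, and define $W_n'=(X_1'+\cdots+X_n')/a_n$ with $a_n=\sqrt{n}$ in case (i) and $a_n=n^{1-1/(2k)}$ in case (ii). Then $(W_n,W_n')$ is an exchangeable pair and $\Delta=(X_I-X_I')/a_n$. The conditional density of $X_I$ given $\textbf{X}_{-I}$ is the exponential tilt of $L$ by $\exp(\beta x(S_n-X_I)/n)$; expanding the tilted mean in $\beta(S_n-X_I)/n$ via the cumulants of $L$ and averaging over $I$ produces, following \cite{sz}, the identity $E(\Delta|\textbf{X})=\lambda(g(W_n)+R)$ with $g(x)=(1-\beta)x$ and $\lambda=1/n$ in case (i), and with $g(x)=2kc_2 x^{2k-1}$ (matching $-p_k'/p_k$) and $\lambda$ of the appropriate inverse-power order in case (ii), where the type-$k$ hypothesis forces all intermediate expansion terms to vanish.

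For $g(x)=(1-\beta)x$, conditions (A1)--(A4) are immediate with any $\tau\in(0,1)$ and $K_\tau=1/\tau$. For $g(x)=2kc_2 x^{2k-1}$, the function is odd and non-decreasing with $xg(x)\ge 0$; the identity $2(g'(x))^2-g(x)g''(x)=(2k)^{3}(2k-1)c_2^{2}x^{4k-4}\ge 0$ gives (A2); (A3) follows from $p_k(x)=c_1 e^{-c_2 x^{2k}}$; and (A4) holds with $K_\tau=\tau^{-(2k-1)}$. The hypothesis $Eg^{2}(W_n)<\infty$ uniformly in $n$ comes from the sub-Gaussian-type tails of $W_n$ implied by (\ref{cw3}) or (\ref{cw4}), as derived in \cite{sz}. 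With the choice $\Delta^{*}=(|X_I|+|X_I'|)/a_n$, the $L^{2}$ bounds on $1-(2\lambda)^{-1}E(\Delta^{2}|\textbf{X})$ and $E(\Delta\Delta^{*}|\textbf{X})$ and the $L^{1}$ bound on $R$ from \cite{sz} show that the three summands in the brace on the right-hand side of (\ref{2.3}) are of order $n^{-1/2}$ in case (i) and $n^{-1/(2k)}$ in case (ii). Inserting these into (\ref{2.3}) and writing $1/(1+|g(z)|)$ as $1/(1+(1-\beta)|z|)$ or $1/(1+2kc_2|z|^{2k-1})$ yields (\ref{cw5}) and (\ref{cw6}).

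The main obstacle is case (ii): one must confirm that the type-$k$ cancellation really produces the power $g(x)\propto x^{2k-1}$ in the Stein identity, with a non-degenerate $\lambda$ and a remainder $R$ controllable in $L^{1}$, and that the exponential tilt (\ref{cw4}) delivers the uniform moment bounds on $W_n=S_n/n^{1-1/(2k)}$ needed both for $Eg^{2}(W_n)<\infty$ and for the $L^{2}$ estimates. Both ingredients are already present in \cite{sz}; the genuinely new step is verifying (A1)--(A4) for the power-law $g$ and transferring the $L^{2}$ estimates of \cite{sz} into the non-uniform framework provided by Theorem~\ref{mr}.
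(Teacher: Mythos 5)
Your proposal follows essentially the same route as the paper: the same coordinate-resampling exchangeable pair, the same identification $g(x)=(1-\beta)x$, $\lambda=1/n$ in case (i) and $g(x)=2kc_2x^{2k-1}$ in case (ii), the same reliance on the moment estimates of Shao and Zhang for the three terms in (\ref{2.3}), and the same final appeal to Theorem \ref{mr}. Your explicit verification of (A1)--(A4) for the power-law $g$ (including the identity $2(g')^2-gg''=(2k)^3(2k-1)c_2^2x^{4k-4}\ge 0$) is a detail the paper leaves implicit but is correct and welcome.
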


\begin{proof}
	
	Recall that $ S_n=\sum\limits_{i=1}^nX_i $. We first construct an exchangeable pair as follows. For  a fixed i, $ 1\leq i\leq n $, given $ \{X_j,j\neq i\} $, let $ X'_i $ be a random variable which is conditionally independent of $ X_i $ and has the same conditional distribution as $ X_i $. Let $ \theta $ be a random index unformly distributed over $ \{1,\cdots,n\}  $ and independent of all other random variables. Let $ S'_n=S_n-X_\theta+X'_\theta $. Then $ (S_n,S'_n) $ is an exhangeable pair. 
	
	When $  0<\beta<1 $, let $ W_n=S_n/\sqrt{n} $ and $ W'_n=S'_n/\sqrt{n} $. Then $ (W_n,W_n') $ is an exchangeable pair.  By Shao and Zhang \cite{sz},  the following relations are satisfied: 
	\begin{align}
		&E(W_n-W'_n|\textbf{X})=\frac{1}{n}\Big((1-\beta) W_n+\sqrt{n}R_2\Big);\label{cw7}\\
		&E|R_2|\leq C n^{-1/2}; \label{cw8} \\
		&E\Big|\frac{1}{2\lambda}E((W_n-W'_n)^2|\textbf{X})-1\Big|^2\leq Cn^{-1};\label{cw9}\\
		&E\Big|\frac{1}{2\lambda}E((S_n-S'_n)|S_n-S'_n||\textbf{X})\Big|^2\leq n^{-1}.\label{cw10}
	\end{align}
	Here $C$ depends on $ \beta $ and $ b $. Thus (\ref{2.2}) is satisfied with $ g(x) =(1-\beta)x$, and $ \lambda=\frac{1}{n} $. Using (\ref{cw8}), (\ref{cw9}), (\ref{cw10}) and Theorem \ref{mr} , we obtained (\ref{cw5}).
	
	When $ \beta=1 $ , recall that  $ W_n=S_n/n^{1-1/2k} $ and define  $ W'_n=S'_n/n^{1-1/2k} $, so $ (W_n,W_n') $ is an exchangeable pair. By Shao and Zhang \cite{sz},  we obtain the following:
	\begin{align}
		&E(W_n-W'_n|\textbf{X})=n^{-2+1/k}\Big(\frac{H^{(2k)}(0)}{(2k-1)!}W_n^{2k-1}+n^{-1+1/2k}R_1\Big);\notag\\
		&E|R_1|\leq Cn^{-1/2k};\label{cw11}\\
		&E\Big|\frac{1}{2\lambda}E((W_n-W'_n)^2|\textbf{X})-1\Big|^2\leq Cn^{-1/k};\label{cw12}\\
		&E\Big|\frac{1}{2\lambda}E((W_n-W'_n)^2|\textbf{X})-1\Big|^2\leq Cn^{-1}.\label{cw13}
	\end{align}
	Here $C$ depends on $ \beta $ and $ b $. 	Thus $ g(x)=\frac{H^{(2k)}(0)}{(2k-1)!}x^{2k-1}=2kc_2x^{2k-1} $ and $ \lambda=n^{-2+\frac{1}{2k}} $. By (\ref{cw11}),(\ref{cw12}), (\ref{cw13}) and Theorem \ref{mr}, we obtain (\ref{cw6}).
	
\end{proof}
\subsection{ Independence test}

Independence test is a classical problem in statistics.    Consider a $p$-dimensional population represented by a random vector $X=(X_1,X_2,\dots,X_p)'$ with covariance matrix $\sum$ and let $X_i=(X_{i1},X_{i2},\dots,X_{in})$ be a random sample of size $n$ selected from $X_{i}$. Recently, a great attention has been paid to the case of large $p$,  see Bai and Saranadasa \cite{bs},   Fan and Li \cite{fl},  Jiang \cite{j}, Liu, Lin and Shao \cite{lls}, Chen and Liu \cite{cl} and the references theorems.

Chen and Shao \cite{cs2}  studied the following statistics.   Let $R=(r_{ij},1\leq i,j\leq p)$ be the sample correlation matrix, where
\begin{equation*}
r_{ij}=\frac{\sum^{n}_{k=1}(X_{ik}-\bar{X}_{i})(X_{jk}-\bar{X}_{j})}{\sqrt{\sum^{n}_{k=1}(X_{ik}-\bar{X}_{i})^2}\sqrt{\sum^{n}_{k=1}(X_{jk}-\bar{X}_{j})^2}}.
\end{equation*}
With the usual notation $\bar{X}_{i}=\frac{1}{n}\sum^{n}_{k=1}X_{ik}$. Now we define $t_{n,p}$ as follows:
\begin{align*}
t_{n,p}=\sum^p_{i=2}\sum^{i-1}_{j=1}r_{ij}^2,
\end{align*}
and let
\begin{equation*}
W_{n,p}=c_{n,p}(t_{n,p}-\frac{p(p-1)}{2(n-1)}),\quad\text{where}\quad c_{n,p}=\frac{n\sqrt{n+2}}{\sqrt{p(p-1)(n-1)}}.
\end{equation*}

If $ X_{ij} $ are i.i.d. random variables, satisfy the condition $E(X_{11}^{24})<\infty$,  and $p=O(n)$,  Chen and Shao \cite{cs2} obtained the following upper bound:
\begin{equation*}
\sup_{z}|P(W_{n,p}\leq z)-\Phi(z)|=O(p^{-1/2}).
\end{equation*}

Our approach allows us to establish the following result.

\begin{theorem}\label{it}
	Let $\{X_{ij},1\leq i,j\leq p\}$ be i.i.d random variables. Assume that $p=O(n)$ and the condition $E(X_{11}^6)<\infty$ is satisfied. Then
	\begin{equation}\label{itr}
	|P(W_{n,p}\leq z)-\Phi(z)|\leqslant\frac{C}{(1+|z|)^2}p^{-1/2},\quad z\in R
	\end{equation}
	
\end{theorem}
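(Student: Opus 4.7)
The plan is to verify the hypotheses of Theorem \ref{mr} with $g(x)=x$ and then to upgrade the non-uniform factor $(1+|z|)^{-1}$ to $(1+|z|)^{-2}$ as was done for quadratic forms in Theorem \ref{th4.1}. As a first step I would construct an exchangeable pair by sampling an independent copy $\{X'_{ij}\}$ of the array and an index $(I,K)$ drawn uniformly on $\{1,\ldots,p\}\times\{1,\ldots,n\}$ independent of everything else, and defining $W'_{n,p}$ as the same functional of the perturbed array in which $X_{IK}$ has been replaced by $X'_{IK}$. By construction $(W_{n,p},W'_{n,p})$ is exchangeable. Exploiting the symmetry of $t_{n,p}$ together with $EX_{11}=0$ and $EX_{11}^2=1$ should yield a linear regression identity of the form
$$E(W_{n,p}-W'_{n,p}\mid \mathbf{X}) = \lambda(W_{n,p}+R)$$
with $\lambda$ of order $(np)^{-1}$ and a remainder $R$ satisfying $E|R|=O(p^{-1/2})$; the remainder absorbs the deviations of the sample means $\bar X_i$ from zero and of the Studentizing denominators $\sum_k(X_{ik}-\bar X_i)^2$ from their expectation $n$.

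The second step is to bound the two conditional-moment quantities appearing in (\ref{2.3}). Writing $\Delta=W_{n,p}-W'_{n,p}$ explicitly as a rational function of the single resampled coordinate and choosing $\Delta^*=|\Delta|$, I would show
$$E\Bigl|1-\frac{1}{2\lambda}E(\Delta^2\mid \mathbf{X})\Bigr|^2\leq \frac{C}{p},\qquad \frac{1}{\lambda^2}E\bigl|E(\Delta\Delta^*\mid \mathbf{X})\bigr|^2 \leq \frac{C}{p}.$$
Because Theorem \ref{mr} asks for an $L^2$ control of a conditional expectation rather than a pointwise concentration inequality, the estimation reduces to an explicit combinatorial count of how many index-tuples in the expansion of a product of two $r_{ij}^2$'s are non-vanishing after conditioning on the unperturbed coordinates. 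The surviving tuples have total degree at most six in the $X_{ij}$'s, so finite sixth moments of $X_{11}$ are enough; this is precisely where the gap with the twenty-fourth moment condition of Chen and Shao \cite{cs2} is closed.

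For the third step, to sharpen the non-uniform factor, the sixth moment assumption implies $EW_{n,p}^4\leq C$ and hence $P(|W_{n,p}|>z)\leq C/(1+|z|)^4$ by Markov's inequality. Reworking the estimates of $\sqrt{E|f'_z(W)|^2}$ and $\sqrt{E|Wf_z(W)|^2}$ in the proof of Theorem \ref{mr} with this stronger tail bound, exactly as in the proof of Theorem \ref{th4.1}, replaces $(1+|g(z)|)^{-1}$ by $(1+|z|)^{-2}$ in each of (\ref{B1c}), (\ref{B2c}), (\ref{B3b}) and (\ref{B4b}). Combining this improvement with the step-two variance bounds and the step-one estimate $E|R|=O(p^{-1/2})$ then yields (\ref{itr}).

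The main obstacle is the variance calculation in step two. Unlike the quadratic-form case, where $\Delta$ is linear in the resampled coordinate and $\lambda$ is explicit, here $\Delta$ is a nonlinear (indeed rational) expression because of the random Studentizing denominators, and one must expand them around their deterministic approximations $n$ and control the resulting error terms. The bookkeeping of how conditioning on the unperturbed coordinates and using $EX'_{IK}=0$, $E(X'_{IK})^2=1$ annihilates most cross terms, leaving only $O(p^{-1})$ surviving $L^2$ mass in both $E(\Delta^2\mid\mathbf{X})-2\lambda$ and $E(\Delta\Delta^*\mid\mathbf{X})$, is the heart of the proof and the reason the argument is delicate but still doable under only six moments.
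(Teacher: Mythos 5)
Your step three (upgrading $(1+|z|)^{-1}$ to $(1+|z|)^{-2}$ via $EW_{n,p}^4\leq C$ and Markov's inequality, exactly as in Theorem \ref{th4.1}) matches the paper. The divergence, and the gap, is in step one: you build the exchangeable pair by resampling a \emph{single entry} $X_{IK}$, whereas the paper (following Chen and Shao \cite{cs2}) resamples an entire row, replacing $X_\theta=(X_{\theta 1},\dots,X_{\theta n})$ by an independent copy $X^*_\theta$. With the row construction the identity $E(r_{\theta^* j}^2\mid\mathbf{X})=\frac{1}{n-1}$ holds exactly (this is (\ref{rij2}) applied to the fresh row), so the regression condition (\ref{2.2}) holds \emph{exactly} with $g(x)=x$, $\lambda=2/p$ and $R=0$; there is no remainder to control. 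Your single-entry construction destroys this exactness: because $X'_{IK}$ enters both the numerator and the Studentizing denominator of every $u'_{Ik}$, the conditional expectation $E(r_{Ij}'^2\mid\mathbf{X})$ has no closed form, the linear regression is only approximate, and the claims that $E(\Delta\mid\mathbf{X})=\lambda(W+R)$ with $\lambda\asymp(np)^{-1}$ and $E|R|=O(p^{-1/2})$ are asserted rather than derived. This is not a cosmetic omission: establishing that remainder bound for a ratio statistic perturbed in one coordinate is itself a delicate expansion, and it is precisely the kind of work the row construction lets one avoid. You have replaced the one step of the argument that is free with the one that is hardest.

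The same issue propagates to step two. The bounds $E|1-\frac{1}{2\lambda}E(\Delta^2\mid\mathbf{X})|^2\leq C/p$ and $\lambda^{-2}E|E(\Delta\Delta^*\mid\mathbf{X})|^2\leq C/p$ are the analogues of the paper's (\ref{l21}) and (\ref{l3}), but in the paper these occupy three lemmas of explicit moment computations on the $u_{ik}$ (including $E(r_{ij}^8)=O(n^{-3})$ under sixth moments), all carried out for the row-resampled $\Delta=c_{n,p}\sum_{j\neq\theta}(r_{\theta j}^2-r_{\theta^* j}^2)$. Your "combinatorial count of index-tuples of degree at most six" is a reasonable heuristic for why six moments suffice, but for your coordinate-resampled $\Delta$ these computations have not been done and would have to be redone from scratch, on top of the unverified regression identity. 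The cleanest repair is simply to adopt the row-resampling pair; your steps two and three then become the paper's Lemmas together with the quadratic-form argument, and the proof closes.
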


Let $\{X^*_{i} \}$ be an independent copy of $\{ X_{i} \}$ and as before,$\theta$ be a random variable uniformly distributed over $\{1,2,\cdots,p\}$; $\theta$ is independent of all $ \{X_{i},X_{i*}, 1\leq i\leq p  \}$. With $t^{*}_{n,p}=t_{n,p}-\sum^p_{j=1, j\neq \theta}r^2_{\theta j}+\sum^p_{j=1, j\neq \theta}r^2_{\theta^*j}$, where
\begin{equation*}
r_{i^*j}=\frac{\sum^{n}_{k=1}(X^*_{ik}-\bar{X}^*_{i})(X_{jk}-\bar{X}_{j})}{\sqrt{\sum^{n}_{k=1}(X^{*}_{ik}-\bar{X}^*_{i})^2}\sqrt{\sum^{n}_{k=1}(X_{jk}-\bar{X}_{j})^2}}.
\end{equation*}
We define
\begin{equation*}
W_{n,p}^*=c_{n,p}\Big(t_{n,p}^*-\frac{p(p-1)}{2(n-1)}\Big).
\end{equation*}
By Chen and Shao \cite{cs2}, $(W_{n,p},W_{n,p}^*)$ is an exchangeable pair and
$$
E(W_{n,p}-W_{n,p}^*|\textbf{X})=\frac{2}{p}W_{n,p}.
$$
Then (\ref{2.2}) is satisfied with $ g(x)=x $ , $ \lambda=\frac{2}{p} $ and $ R=0 $. To finish the proof, we begin with some premilinary propeties of $W_{n,p}  $.
Denote
\begin{equation*}
u_{ik}=\frac{X_{ik}-\bar{X}_{i}}{\sqrt{\sum^{n}_{k=1}(X_{ik}-\bar{X}_{i})^2}}.
\end{equation*}
It is easy to see that
\begin{equation*}
\sum^n_{k=1}u_{ik}=0,\quad\sum^n_{k=1}u_{ik}^2=1,\quad\Rightarrow\quad E(u_{ik})=0, \quad E(u_{ik}^2)=\frac{1}{n}.
\end{equation*}
Furthermore,  for $k\neq k'$, we have
\begin{equation}\label{it2}
E(u_{ik}u_{ik'})=\frac{-1}{n(n-1)}.
\end{equation}
Denoting $u_{i}=(u_{i1},u_{i2},\cdots,u_{in})$, we have $r_{ij}=u_{i}u_{j}'$, and
\begin{align}\label{rij2}
E(r_{ij}^2|X_{i}) &=E(u_{i}u_{j}'u_{j}u_{i}'|X_{i})
=u_{i}E(u_{j}'u_{j})u_{i}'\notag\\
&=\frac{1}{n}\sum_{k=1}^{n}u_{ik}^2-\frac{1}{n(n-1)}\sum_{k_{1}\neq k_{2}}u_{ik_{1}}u_{ik_{2}}\notag\\
&=\frac{1}{n-1}.
\end{align}
By Chen and Shao \cite{cs2}, under condition $E(X_{11}^6)<\infty$, we derive the following relations for latge $n$:
\begin{align}
&E(u_{ik}^4)=\frac{K}{n^2}+O\Big(\frac{1}{n^3}\Big),\label{lz2}\\
& E(u_{ik_{1}}^3u_{ik_{2}})=-\frac{K}{n^2(n-1)}+O\Big(\frac{1}{n^4}\Big),\label{lz3}\\
& E(u_{ik}^2u_{ik'}^2)=-\frac{K}{n^2(n-1)}+\frac{1}{n(n-1)}+O\Big(\frac{1}{n^4}\Big).\label{lz4} \\ 
&E(u_{ik_{1}}^2u_{ik_{2}}u_{ik_{3}})=\frac{2K}{n^2(n-1)(n-2)}-\frac{1}{n(n-1)(n-2)}+O\Big(\frac{1}{n^5}\Big),\label{lz5}\\
&E(u_{ik_{1}}u_{ik_{2}}u_{ik_{3}}u_{ik_{4}})=\frac{3}{n(n-1)(n-2)(n-3)}-\frac{6K}{n^2(n-1)(n-2)(n-3)}+O\Big(\frac{1}{n^6}\Big),\label{lz6}\\
&E(r_{ij}^4)=\frac{3}{n^2}+O\Big(\frac{1}{n^3}\Big),\label{lz8}\\
&E(r_{ij}^4|X_{i})=\frac{3}{n^2}+O\Big(\frac{1}{n^3}\Big)+\Big(\frac{K-3}{n^2}+O\Big(\frac{1}{n^3}\Big)\Big)\sum_{k=1}^nu_{ik}^4.\label{lz7}
\end{align}
Here 
\begin{equation*}
K=\frac{E(X_{11}-\mu)^4}{\sigma^4}.
\end{equation*}

\begin{lemma} Under   $E(X_{11}^6)<\infty$, for large $n$,
	\begin{equation}\label{lemma1}
	E(r_{ij}^8)=O(\frac{1}{n^3}).
	\end{equation}
\end{lemma}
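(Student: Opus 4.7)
My plan reduces the $8$th-moment bound to a $6$th-moment bound and then proceeds by direct expansion. Since $\|u_i\|_2=\|u_j\|_2=1$, Cauchy--Schwarz gives $|r_{ij}|=|u_i u_j'|\le 1$ almost surely, so $r_{ij}^8\le r_{ij}^6$ and it suffices to prove $E(r_{ij}^6)=O(n^{-3})$.

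I would then expand
\begin{equation*}
r_{ij}^6=\sum_{k_1,\ldots,k_6=1}^n\prod_{m=1}^6 u_{ik_m}u_{jk_m},
\end{equation*}
and, using the independence of $X_i$ and $X_j$ together with the identical joint distribution of $u_i$ and $u_j$, obtain
\begin{equation*}
E(r_{ij}^6)=\sum_{k_1,\ldots,k_6=1}^n\bigl[E(u_{1k_1}\cdots u_{1k_6})\bigr]^2.
\end{equation*}
The summand depends only on the set-partition pattern of $(k_1,\ldots,k_6)$: a pattern with $r$ distinct index values and block sizes $(a_1,\ldots,a_r)$ generates $\Theta(n^r)$ tuples, and the inner moment $E(u_{1k_{(1)}}^{a_1}\cdots u_{1k_{(r)}}^{a_r})$ is estimated by the identities (\ref{it2})--(\ref{lz6}), supplemented by the analogous formulas for patterns containing blocks of size $5$ or $6$; these are produced by the same recipe from $\sum_k u_{1k}=0$, $\sum_k u_{1k}^2=1$, $|u_{1k}|\le 1$, and $E(X_{11}^6)<\infty$.

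A short tally shows that the partition $(2,2,2)$ is the binding one: since $(\sum_k u_{1k}^2)^3=1$, comparing leading orders forces $E(u_{1k_1}^2 u_{1k_2}^2 u_{1k_3}^2)=\Theta(n^{-3})$ for distinct $k_1,k_2,k_3$, so this block contributes $\Theta(n^3\cdot(n^{-3})^2)=\Theta(n^{-3})$. All remaining patterns (for example $(4,2)$, $(3,3)$, $(6)$, and every pattern containing a block of size one) yield strictly smaller contributions, either because the index-tuple count is lower or because $\sum_k u_{1k}=0$ forces cancellations in the moments, in the spirit of (\ref{lz3}) and (\ref{lz5}).

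The main obstacle will be the cancellation analysis for partitions containing singleton blocks: individually $E(u_{1k})=0$, but mixed moments such as $E(u_{1k_1}^{a}u_{1k_2})$ are only of the right lower order after carefully identifying leading and remainder terms, and one must verify that the sum of the squared mixed moments over all such patterns is $O(n^{-3})$ under only the sixth-moment hypothesis. Once this bookkeeping is in place, combining the partition counts with the moment estimates delivers $E(r_{ij}^6)=O(n^{-3})$, hence $E(r_{ij}^8)=O(n^{-3})$.
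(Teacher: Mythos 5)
Your proposal is correct and follows essentially the same route as the paper: reduce to $E(r_{ij}^6)$ via $|r_{ij}|\le 1$, expand the sixth power, factor using the independence and identical distribution of $u_i$ and $u_j$, and bound the mixed moments $E(u_{1k_1}\cdots u_{1k_6})$ pattern by pattern from $\sum_k u_{1k}=0$, $\sum_k u_{1k}^2=1$ and (\ref{it2})--(\ref{lz6}) --- the paper carries out exactly this bookkeeping, writing $r_{ij}^6=r_{ij}^4\cdot r_{ij}^2$ and listing the same ten partition-pattern estimates you would need. Your observation that $E(r_{ij}^6)=\sum_{k_1,\ldots,k_6}\bigl[E(u_{1k_1}\cdots u_{1k_6})\bigr]^2$ is a sum of nonnegative terms is a mild simplification (only upper bounds on each pattern are needed, with no cancellation across patterns), but the deferred singleton-block moment estimates are precisely the ones the paper derives.
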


\begin{proof}
	
	Since $ r_{ij}^2\leq 1$, it suffices to show that for large $n$,
	\begin{equation*}
	E(r_{ij}^6)=O(\frac{1}{n^3}).
	\end{equation*}
	Indeed we first write $E(r_{ij}^6)$ as follows:
	\begin{align*}
	E(r_{ij}^6)=&E(r_{ij}^4r_{ij}^2)\\
	=&E\Big(\sum_{k=1}^nu_{jk}^4u_{ik}^4+4\sum_{k_{1}=1}^n\sum_{k_{2}=1\atop k_{2}\neq k_{1}}^nu_{jk_{1}}^3u_{jk_{2}}u_{ik_{1}}^3u_{ik_{2}}\\
	+&3\sum_{k_{1}=1}^n\sum_{k_{2}=1\atop k_{2}\neq k_{1}}^nu_{jk_{1}}^2u_{jk_{2}}^2u_{ik_{1}}^2u_{ik_{2}}^2+6\sum_{k_{1}=1}^n\sum_{k_{2}=1\atop k_{2}\neq k_{1}}^n\sum_{\substack{k_{3}=1\\k_{3}\neq k_{1}\\k_{3}\neq k_{2}}}^nu_{jk_{1}}^2u_{jk_{2}}u_{jk_{3}}u_{ik_{1}}^2u_{ik_{2}}u_{ik_{3}}\\
	+&\sum_{k_{1}=1}^n\sum_{k_{2}=1\atop k_{2}\neq k_{1}}^n\sum_{\substack{k_{3}=1\\k_{3}\neq k_{1}\\k_{3}\neq k_{2}}}^n\sum_{\substack{k_{4}=1\\k_{4}\neq k_{1}\\k_{4}\neq k_{2}\\k_{4}\neq k_{3}}}^nu_{jk_{1}}u_{jk_{2}}u_{jk_{3}}u_{jk_{4}}u_{ik_{1}}u_{ik_{2}}u_{ik_{3}}u_{ik_{4}}\Big)\\
	\times&\Big(\sum_{k=1}^nu_{ik}^2u_{jk}^2+\sum_{k_{1}\neq k_{2}}u_{ik_{1}}u_{ik_{2}}u_{jk_{1}}u_{jk_{2}}\Big).
	\end{align*}
	The next step is to derive the following relations, for large $n$:
	\begin{align*}
	& &E(u_{ik}^4u_{ik'}^2)=O\Big(\frac{1}{n^3}\Big)\label{bb}, ~~|Eu_{ik_{1}}^5u_{ik_{2}}|=O\Big(\frac{1}{n^3}\Big),\\
	&  &|Eu_{ik_{1}}^4u_{ik_{2}}u_{ik_{3}}|=O\Big(\frac{1}{n^4}\Big), ~~|Eu_{ik_{1}}^3u_{ik_{2}}^3|=O\Big(\frac{1}{n^3}\Big),\\
	& &|Eu_{ik_{1}}^3u_{ik_{2}}^2u_{ik_{3}}|=O\Big(\frac{1}{n^3}\Big),~~|Eu_{ik_{1}}^3u_{ik_{2}}u_{ik_{3}}u_{ik_{4}}|=O\Big(\frac{1}{n^4}\Big),\\
	& &Eu_{ik_{1}}^2u_{ik_{2}}^2u_{ik_{3}}^2=O\Big(\frac{1}{n^3}\Big),~~|Eu_{ik_{1}}^2u_{ik_{2}}^2u_{ik_{3}}u_{ik_{4}}|=O\Big(\frac{1}{n^4}\Big),\\
	& &|Eu_{ik_{1}}^2u_{ik_{2}}u_{ik_{3}}u_{ik_{4}}u_{ik_{5}}|=O\Big(\frac{1}{n^5}\Big),~~|Eu_{ik_{1}}u_{ik_{2}}u_{ik_{3}}u_{ik_{4}}u_{ik_{5}}u_{ik_{6}}|=O\Big(\frac{1}{n^6}\Big).
	\end{align*}
	
	As an example, we just calculate the first three items. The other items can be proved in a similar way by (\ref{lz2})$\sim$(\ref{lz6}).
	
	For $E(u_{ik}^4u_{ik'}^2)=O\Big(\frac{1}{n^3}\Big)$, we have by (\ref{lz2}):
	\begin{align*}
	E(u_{ik}^4u_{ik'}^2)=&\frac{1}{n-1}E(u_{ik}^4(\sum_{k'=1\atop k'\neq k}^nu_{ik'}^2)\\
	=&\frac{1}{n-1}[E(u_{ik}^4(\sum_{k'=1}^nu_{ik'}^2))-E(u_{ik}^6)]\\
	\leq&\frac{2}{n-1}E(u_{ik}^4)\\
	=&\frac{2K}{n^2(n-1)}+O(\frac{1}{n^4}).
	\end{align*}
	For $|Eu_{ik_{1}}^5u_{ik_{2}}|=O\Big(\frac{1}{n^3}\Big)$, we have by (\ref{lz2}):
	\begin{align*}
	|Eu_{ik_{1}}^5u_{ik_{2}}|=&\Big|\frac{1}{(n-1)}E\Big(u_{ik_{1}}^5\Big(\sum_{k\neq k_{1}}u_{ik}\Big)\Big)\Big|\\
	=&\frac{1}{(n-1)}\Big|E\Big(u_{ik_{1}}^5\Big(\sum_{k=1}^nu_{ik}-u_{ik_{1}}\Big)\Big|\\
	=&\frac{1}{(n-1)}E(u_{ik}^6)\\
	=&O\Big(\frac{1}{n^3}\Big)	.
	\end{align*}
	For $|Eu_{ik_{1}}^4u_{ik_{2}}u_{ik_{3}}|=O\Big(\frac{1}{n^4}\Big)$,  by (\ref{it2}) , (\ref{lz2}) and above two conclusions about $|Eu_{ik_{1}}^5u_{ik_{2}}|  $ and $ E(u_{ik}^4u_{ik'}^2) $, we have:
	\begin{align*}
	|Eu_{ik_{1}}^4u_{ik_{2}}u_{ik_{3}}|=&\Big|\frac{1}{(n-1)(n-2)}E\Big[u_{ik_{1}}^4\Big(\sum_{k\neq k'}u_{ik}u_{ik'}\Big)-2\sum_{k_{2}\neq k_{1}}u_{ik_{1}}^5u_{ik_{2}}\Big]\Big|\\
	\leq&\frac{1}{(n-1)(n-2)}\Big(\Big|Eu_{ik_1}^4\Big|+2(n-1)\Big|Eu_{ik_{1}}^5u_{ik_2}\Big|\Big)\\
	=&O\Big(\frac{1}{n^4}\Big).
	\end{align*}
	In a similar way, we can obtain all other relations. Then the lemma can be proved.
	
\end{proof}

\begin{lemma} \label{l2} Under  the condition $E(X_{11}^6)<\infty$, for large $n$,
	
	\begin{equation}\label{l21}
	\sqrt{E\Big|1-\frac{1}{2\lambda}E(\Delta^2|\textbf{X})\Big|^2}=O\Big(\frac{1}{p^{1/2}}\Big).
	\end{equation}
	
\end{lemma}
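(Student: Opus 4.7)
The plan is to compute $E(\Delta^2\mid \textbf{X})$ exactly using the conditional independence of $X_i^\ast$ from $\textbf{X}$, identify a deterministic piece that cancels with $1$ after the correct normalization, and then bound the residual mean-zero piece in $L^2$ using the moment bounds collected in Lemma 3.1 and (\ref{lz2})--(\ref{lz7}).

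First I would condition on $\theta$. Writing $A_i=\sum_{j\neq i}r_{ij}^2$ and $A_i^{\ast}=\sum_{j\neq i}r_{i^\ast j}^2$, we have $\Delta=c_{n,p}(A_\theta-A_\theta^{\ast})$, so
$$E(\Delta^2\mid\textbf{X})=\frac{c_{n,p}^2}{p}\sum_{i=1}^p E\big[(A_i-A_i^{\ast})^2\mid \textbf{X}\big].$$
Since $X_i^{\ast}$ is an independent copy of $X_i$ and is independent of $\textbf{X}$, the inner conditional expectation splits:
$$E[(A_i-A_i^{\ast})^2\mid \textbf{X}]=A_i^2-2A_i\,E[A_i^{\ast}\mid\textbf{X}]+E[(A_i^{\ast})^2\mid\textbf{X}].$$
By the analogue of (\ref{rij2}), $E[A_i^{\ast}\mid\textbf{X}]=(p-1)/(n-1)$, while the cross term $E[(A_i^{\ast})^2\mid\textbf{X}]$ decomposes into $\sum_{j\neq i}E[r_{i^\ast j}^4\mid X_j]$ plus $\sum_{j\neq k,\,j,k\neq i}E[r_{i^\ast j}^2 r_{i^\ast k}^2\mid X_j,X_k]$. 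Using (\ref{lz7}) and a direct expansion of $r_{i^\ast j}^2 r_{i^\ast k}^2=(u_{i^\ast}u_j^\prime)^2(u_{i^\ast}u_k^\prime)^2$ together with (\ref{lz2})--(\ref{lz6}), each of these conditional moments can be written as a constant (depending only on $n$) plus $\textbf{X}$-measurable quadratic forms in $\{u_{jk}\}$ whose expectations are $O(n^{-3})$.

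Plugging in $\lambda=2/p$, so $\tfrac{1}{2\lambda}=p/4$, I would then compare
$$\frac{1}{2\lambda}E(\Delta^2\mid\textbf{X})=\frac{c_{n,p}^2}{4}\sum_{i=1}^p\Big[A_i^2-\tfrac{2(p-1)}{n-1}A_i+E[(A_i^{\ast})^2\mid\textbf{X}]\Big]$$
with $1$. Using $c_{n,p}^2=n^2(n+2)/(p(p-1)(n-1))$ and the moment identities $Er_{ij}^2=1/(n-1)$, $Er_{ij}^4=3/n^2+O(n^{-3})$, the deterministic parts cancel to $1+o(p^{-1/2})$, leaving a sum of mean-zero contributions: essentially a centred $U$-statistic in $\{r_{ij}^2-(n-1)^{-1}\}$ (from $\sum_i A_i^2$ and the $A_i$ piece) and further centred $U$-statistics in higher powers of $u_{jk}$ (from the expansion of $E[(A_i^{\ast})^2\mid\textbf{X}]$).

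To bound the $L^2$ norm of this residual, I would estimate the variance of each piece separately. For example, $\sum_i A_i^2=2\sum_{i<j}r_{ij}^4+2\sum_{i<j<k}(r_{ij}^2 r_{ik}^2+r_{ij}^2 r_{jk}^2+r_{ik}^2 r_{jk}^2)$, and its variance is controlled by $O(p^2)\cdot \mathrm{Var}(r_{ij}^4)+O(p^3)\cdot\mathrm{Var}(r_{ij}^2 r_{ik}^2)$. Lemma 3.1 gives $E r_{ij}^8=O(n^{-3})$, hence $\mathrm{Var}(r_{ij}^4)=O(n^{-3})$ and (via Cauchy--Schwarz) $\mathrm{Var}(r_{ij}^2 r_{ik}^2)=O(n^{-3})$; multiplied by the prefactor $c_{n,p}^4=O(p^{-2})$ and $p=O(n)$, this gives $O(p^{-1})$ on the squared scale, i.e.\ $O(p^{-1/2})$ on the $L^2$ scale. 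The contributions from $E[(A_i^{\ast})^2\mid\textbf{X}]$ are handled analogously, the worst pieces being quadratic forms in $\{u_{jk}\}$ whose second moments are controlled by (\ref{lz2})--(\ref{lz6}).

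The main obstacle is the bookkeeping in the two previous paragraphs: one must isolate the deterministic leading part so that it exactly matches $1$ up to order $p^{-1/2}$, and one must verify that every residual term is a centred $U$-statistic whose degree and coefficient allow a variance bound of order $p^{-1}$ once Lemma 3.1 and the sixth-moment assumption are invoked. No truly new technique beyond Lemma 3.1 is needed, but several of the residual quadratic forms in $\{u_{jk}\}$ require careful pairing of indices to avoid overcounting.
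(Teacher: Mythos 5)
Your overall strategy---expand $E(\Delta^2\mid\textbf{X})$ via the conditional independence of $X_i^*$, peel off a deterministic part that matches $1$ up to $O(1/n)$, and bound the centred residual in $L^2$---is the same route the paper takes (the paper simply quotes this decomposition from Chen and Shao \cite{cs2} as the bound involving $J_1,J_2$, and then devotes the entire proof to $EJ_1^2$). The gap is in the one concrete variance estimate you write down, which is exactly the part that carries all the difficulty. The bound $\mathrm{Var}\bigl(\sum_i A_i^2\bigr)\lesssim O(p^2)\mathrm{Var}(r_{ij}^4)+O(p^3)\mathrm{Var}(r_{ij}^2r_{ik}^2)$ treats the $O(p^3)$ monomials $r_{ij}^2r_{ik}^2$ as pairwise uncorrelated, which they are not (any two sharing an index are dependent, and there are $O(p^4)$--$O(p^5)$ such pairs). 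Moreover the arithmetic does not close even on its own terms: with the Cauchy--Schwarz bound $\mathrm{Var}(r_{ij}^2r_{ik}^2)=O(n^{-3})$ you get $O(p^3n^{-3})=O(1)$, and the squared prefactor is $c_{n,p}^4/16=O(n^4/p^4)=O(1)$ for $p\asymp n$, not $O(p^{-2})$ as you claim ($c_{n,p}^2=n^2(n+2)/(p(p-1)(n-1))$ is bounded \emph{below} when $p=O(n)$). So the stated plan yields $O(1)$ on the squared scale, not $O(p^{-1})$.

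What actually rescues the estimate, and what the paper's proof is built on, are two structural facts you do not use. First, the exact identity $E(r_{ij}^2\mid X_i)=\frac{1}{n-1}$ from (\ref{rij2}) forces every centred cross term of the form $E\bigl[(\cdots)\,E(r_{ik}^2-\tfrac{1}{n-1}\mid X_i)\bigr]$ to vanish; this is why one must center to $r_{ij}^2-\frac{1}{n-1}$ from the start rather than manipulate the uncentred $A_i$, and it kills all but a few covariance classes (see the collapse in (\ref{l23})). Second, the surviving class, $\mathrm{Cov}(B_i^2,B_{i'}^2)$ with $B_i=\sum_{j\neq i}(r_{ij}^2-\tfrac{1}{n-1})$, is governed by the ``cycle'' quadruple products $E\prod(r^2-\tfrac{1}{n-1})$ over the index pattern $\{i,j\},\{i,j'\},\{i',j\},\{i',j'\}$, which must be shown to be $O(n^{-5})$ (the paper's (\ref{l24})) so that the $O(p^2)$ such quadruples per pair $(i,i')$ produce $EB_i^2B_{i'}^2\le \frac{4p^2}{n^4}+O(n^{-3})$ and the leading $\frac{4p^2}{n^4}$ cancels against $\frac{4(p-1)^2}{n^4}$ in (\ref{l22}). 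This cancellation plus the conditional-mean identity, not a term-count times an individual variance, is what yields $EJ_1^2=O(n^{-3})$ and hence the $O(p^{-1/2})$ rate; your closing remark about ``careful pairing of indices'' points at this but does not supply it, and the bound you propose in its place would fail.
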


\begin{proof}
	
	Recall that $ \lambda=\frac{2}{p} $. Chen and Shao \cite{cs2} obtained the following relation:	
	\begin{align*}
	\Big|1-\frac{1}{2\lambda}E(\Delta^2|\textbf{X})\Big|\leq &\frac{p\cdot c_{n,p}^2}{4}(J_{1}+J_{2})+O\Big(\frac{1}{n}\Big),
	\end{align*}
	where
	\begin{align*}
	J_{1}=&\Big|\frac{1}{p}\sum_{i=1}^p\Big(\sum_{j=1\atop j\neq i}\Big(r_{ij}^2-\frac{1}{n-1}\Big)\Big)^2-\frac{2(p-1)}{n^2}\Big|,\\
	J_{2}=&\Big|\frac{1}{p}\sum_{i=1}^pE\Big(\Big(\sum_{j=1\atop j\neq i}\Big(r_{i^*j}^2-\frac{1}{n-1}\Big)\Big)^2|\textbf{X}\Big)-\frac{2(p-1)}{n^2}\Big|.
	\end{align*}
	By Jensen's inequality of conditional expectation,  we only need to estimate $EJ_{1}$.
	It is easy to check that
	\begin{align*}
	E\Big(\frac{1}{p}\sum_{i=1}^p\Big(\sum_{j=1\atop j\neq i}^p\Big(r_{ij}^2-\frac{1}{n-1}\Big)\Big)^2)=&E\Big(\Big(\sum_{j=1\atop j\neq i}^p\Big(r_{ij}^2-\frac{1}{n-1}\Big)\Big)^2\Big)\\
	=&(p-1)\Big(E(r_{ij}^4)-\frac{1}{(n-1)^2}\Big)=(p-1)\frac{2}{n^2}+O\Big(\frac{1}{n^2}\Big).
	\end{align*}
	Hence we have
	\begin{align}
\notag	EJ_{1}^2=&E\Big\{\Big[\frac{1}{p}\sum_{i=1}^p\Big(\sum_{j=1\atop j\neq i}^p\Big(r_{ij}^2-\frac{1}{n-1}\Big)\Big)^2-\frac{2(p-1)}{n^2}\Big]^2\Big\}\\
\notag	=&E\Big\{\Big[\frac{1}{p}\sum_{i=1}^p\Big(\sum_{j=1\atop j\neq i}^p\Big(r_{ij}^2-\frac{1}{n-1}\Big)\Big)^2\Big]^2\Big\}-\frac{4(p-1)^2}{n^4}+O\Big(\frac{1}{n^3}\Big)\\
\notag	=&\frac{1}{p^2}E\Big\{\sum_{i=1}^p\Big[\sum_{j=1\atop j\neq i}^p\Big(r_{ij}^2-\frac{1}{n-1}\Big)\Big]^4+\sum_{i\neq i'}^p\Big[\sum_{j=1\atop j\neq i}^p\Big(r_{ij}^2-\frac{1}{n-1}\Big)\Big]^2\Big[\sum_{j=1\atop j\neq i'}^p\Big(r_{i'j}^2-\frac{1}{n-1}\Big)\Big]^2\Big\}\\
	& -\frac{4(p-1)^2}{n^4}+O\Big(\frac{1}{n^3}\Big).\label{l22}
	\end{align}
	
	We use the last expression for $EJ_{1}^2$ and estimate each term in order to show that $EJ_{1}^2=O\Big(\frac{1}{n^3}\Big)$. \\
	For the first item on the right side of the last equality of (\ref{l22}), by (\ref{rij2}), (\ref{lz8}), (\ref{lz7}) and (\ref{lemma1}), we have:
	
	\begin{align}
	 \notag&~\quad E\Big(\sum_{j=1\atop j\neq i}^p\Big(r_{ij}^2-\frac{1}{n-1}\Big)\Big)^4\\
	\notag&=
	E\Big\{\Big(\sum_{j=1\atop j\neq i}^p\Big(r_{ij}^2-\frac{1}{n-1}\Big)^4\Big)+4\sum_{j_{1}\neq j_{2}\atop j_{1},j_{2} \neq i}E\Big(\Big(r_{ij_{1}}^2-\frac{1}{n-1}\Big)^3\Big|X_{i}\Big)E\Big(\Big(r_{ij_{2}}^2-\frac{1}{n-1}\Big)\Big|X_{i}\Big)\\
	\notag&+3\sum_{j_{1}\neq j_{2}\atop j_{1},j_{2}\neq i}E\Big(\Big(r_{ij_{1}}^2-\frac{1}{n-1}\Big)^2\Big|X_{i}\Big)E\Big(\Big(r_{ij_{2}}^2-\frac{1}{n-1}\Big)^2\Big|X_{i}\Big)\\
	\notag&+6\sum_{j_{1}\neq i}^p\sum_{ \substack{j_{2}\neq j_{1}\\ j_{2}\neq i}}^p\sum_{\substack{j_{3}\neq j_{1}\\ j_{3}\neq j_{2}\\j_{3}\neq i}}^pE\Big(\Big(r_{ij_{1}}^2-\frac{1}{n-1}\Big)^2\Big|X_{i}\Big)E\Big(\Big(r_{ij_{2}}^2-\frac{1}{n-1}\Big)\Big|X_{i}\Big)E\Big(\Big(r_{ij_{3}}^2-\frac{1}{n-1}\Big)\Big|X_{i}\Big)\\
	\notag&+\sum_{j_{1}\neq i}^p\sum_{\substack{j_{2}\neq j_{1}\\ j_{2}\neq i}}^p\sum_{\substack{j_{3}\neq j_{1}\\j_{3}\neq j_{2}\\j_{3}\neq i}}^p\sum_{\substack{j_{4}\neq j_{1}\\j_{4}\neq j_{2}\\j_{4}\neq j_{3}\\j_{4}\neq i}}^p\Big[E\Big(\Big(r_{ij_{1}}^2-\frac{1}{n-1}\Big)\Big|X_{i}\Big)E\Big(\Big(r_{ij_{2}}^2-\frac{1}{n-1}\Big)\Big|X_{i}\Big)\\
	\notag&\times E\Big(\Big(r_{ij_{3}}^2-\frac{1}{n-1}\Big)\Big|X_{i}\Big)E\Big(\Big(r_{ij_{4}}^2-\frac{1}{n-1}\Big)\Big|X_{i}\Big)\Big]\Big\}\\
	\notag&=	E\Big\{\Big(\sum_{j=1\atop j\neq i}^p\Big(r_{ij}^2-\frac{1}{n-1}\Big)^4\Big)+3\sum_{j_{1}\neq j_{2}\atop j_{1},j_{2} \neq i}E\Big(\Big(r_{ij_{1}}^2-\frac{1}{n-1}\Big)^2\Big|X_{i}\Big)E\Big(\Big(r_{ij_{2}}^2-\frac{1}{n-1}\Big)^2\Big|X_{i}\Big)\Big\}\\
	&=O\Big(\frac{1}{n^2}\Big)\label{l23}.
	\end{align}
	
	For the second item on the right side of the last equality of (\ref{l22}), we have 
	\begin{align*}
	&\frac{1}{p^2}\sum_{i\neq i'}^pE\Big(\sum_{j=1\atop j \neq i}^p\Big(r_{ij}^2-\frac{1}{n-1}\Big)\Big)^2\Big(\sum_{j=1\atop j\neq i'}^p\Big(r_{i'j}^2-\frac{1}{n-1}\Big)\Big)^2\\
	=&\frac{1}{p^2}\sum_{i\neq i'}^pE\Big(\sum_{j=1\atop j\neq i,i'}^p\Big(r_{ij}^2-\frac{1}{n-1}\Big)+r_{ii'}^2-\frac{1}{n-1}\Big)^2\Big(\sum_{j=1\atop j\neq i,i'}^p\Big(r_{i'j}^2-\frac{1}{n-1}\Big)+r_{ii'}^2-\frac{1}{n-1}\Big)^2.
	\end{align*}
	Also, we can see
	\begin{align*}
	&E\Big(\sum_{j=1\atop j\neq i,i'}^p\Big(r_{ij}^2-\frac{1}{n-1}\Big)+r_{ii'}^2-\frac{1}{n-1}\Big)^2\Big(\sum_{j=1\atop j\neq i,i'}^p\Big(r_{i'j}^2-\frac{1}{n-1}\Big)+r_{ii'}^2-\frac{1}{n-1}\Big)^2\\
	=&E\Big[\Big(\sum_{j=1\atop j\neq i,i'}^p\Big(r_{ij}^2-\frac{1}{n-1}\Big)\Big)^2+\Big(r_{ii'}^2-\frac{1}{n-1}\Big)^2+2\Big(\sum_{j=1\atop j\neq i,i'}^p\Big(r_{ij}^2-\frac{1}{n-1}\Big)\Big)\Big(r_{ii'}^2-\frac{1}{n-1}\Big)\Big]\\ &\times \Big[\Big(\sum_{j=1\atop j\neq i,i'}^p\Big(r_{i'j}^2-\frac{1}{n-1}\Big)\Big)^2+\Big(r_{ii'}^2-\frac{1}{n-1}\Big)^2+2\Big(\sum_{j=1\atop j\neq i,i'}^p\Big(r_{i'j}^2-\frac{1}{n-1}\Big)\Big)\Big(r_{ii'}^2-\frac{1}{n-1}\Big)\Big].
	\end{align*}
	To estimate the above item, we need to estimate 
	\begin{align*}
&E\Big(r_{ij}^2- \frac{1}{n-1}\Big)\Big(r_{ij'}^2- \frac{1}{n-1}\Big)\Big(\sum_{j=1\atop j\neq i,i'}^nr_{i'j}^2-\frac{1}{n-1}\Big)^2,\quad E\Big(\sum_{j=1\atop j\neq i,i'}^pr_{ij}^2-\frac{1}{n-1}\Big)^2\Big(\sum_{j=1\atop j\neq i,i'}^pr_{i'j}^2-\frac{1}{n-1}\Big)^2,\\
&E\Big(\sum_{j=1\atop j\neq i,i'}\Big(r_{ij}^2-\frac{1}{n-1}\Big)\Big)^2\Big(r_{ii'}^2-\frac{1}{n-1}\Big)^2,\quad E\Big(\sum_{j=1\atop j\neq i,i'}\Big(r_{ij}^2-\frac{1}{n-1}\Big)\Big)^2\Big(\sum_{j=1\atop j\neq i,i'}r_{i'j}^2-\frac{1}{n-1}\Big)\Big(r_{ii'}^2-\frac{1}{n-1}\Big),\\
&E\Big(r_{ii'}^2-\frac{1}{n-1}\Big)^3\Big(\sum_{j=1\atop j\neq i,i'}\Big(r_{i'j}^2-\frac{1}{n-1}\Big)\Big), \quad E\Big(r_{ii'}^2-\frac{1}{n-1}\Big)^2\Big(\sum_{j=1\atop j\neq i,i'}\Big(r_{i'j}^2-\frac{1}{n-1}\Big)\Big)\Big(\sum_{j=1\atop j\neq i,i'}\Big(r_{ij}^2-\frac{1}{n-1}\Big)\Big).
	\end{align*}
	After some simplification,  we can see
	\begin{align*}
	&E\Big(r_{ij}^2- \frac{1}{n-1}\Big)\Big(r_{ij'}^2- \frac{1}{n-1}\Big)\Big(\sum_{j=1\atop j\neq i,i'}^nr_{i'j}^2-\frac{1}{n-1}\Big)^2\\
	=&E\Big\{E\Big[\sum_{j''=1\atop j''\neq i,i',j,j'}\Big(r_{ij}^2- \frac{1}{n-1}\Big)\Big(r_{ij'}^2- \frac{1}{n-1}\Big)\Big(r_{i'j''}^2-\frac{1}{n-1}\Big)^2\Big|(X_{i},X_{i'})\Big]\\
	&+E\Big[\Big(r_{ij}^2- \frac{1}{n-1}\Big)\Big(r_{ij'}^2- \frac{1}{n-1}\Big)\Big(\Big(r_{i'j'}^2-\frac{1}{n-1}\Big)^2+\Big(r_{i'j}^2-\frac{1}{n-1}\Big)^2\Big)\Big|(X_{i},X_{i'})\Big]\\
	&+E\Big[2\Big(r_{ij}^2- \frac{1}{n-1}\Big)\Big(r_{ij'}^2- \frac{1}{n-1}\Big)\Big(\sum_{j''=1\atop j''\neq i,i',j,j'}\Big(r_{i'j}^2- \frac{1}{n-1}\Big)\Big(r_{i'j''}^2- \frac{1}{n-1}\Big)\Big)\Big|(X_{i},X_{i'})\Big]\\
	&+E\Big[2\Big(r_{ij}^2- \frac{1}{n-1}\Big)\Big(r_{ij'}^2- \frac{1}{n-1}\Big)\Big(\sum_{j''=1\atop j''\neq i,i',j,j'}\Big(r_{i'j'}^2-\frac{1}{n-1}\Big)\Big(r_{i'j''}^2-\frac{1}{n-1}\Big)\Big)\Big|(X_{i},X_{i'})\Big]\\
	&+E\Big[\Big(r_{ij}^2- \frac{1}{n-1}\Big)\Big(r_{ij'}^2- \frac{1}{n-1}\Big)\Big(\sum_{j_{1}\neq j_{2}\atop j_{1},j_{2}\neq j,j',i,i'}\Big(r_{i'j_{1}}^2- \frac{1}{n-1}\Big)\Big(r_{i'j_{2}}^2- \frac{1}{n-1}\Big)\Big|(X_{i},X_{i'})\Big)\Big]\\
	&+2\Big(r_{ij}^2- \frac{1}{n-1}\Big)\Big(r_{ij'}^2- \frac{1}{n-1}\Big)\Big(r_{i'j}^2-\frac{1}{n-1}\Big)\Big(r_{i'j'}^2-\frac{1}{n-1}\Big)\Big\}\\
	=&2E\Big(r_{ij}^2- \frac{1}{n-1}\Big)\Big(r_{ij'}^2- \frac{1}{n-1}\Big)\Big(r_{i'j}^2-\frac{1}{n-1}\Big)\Big(r_{i'j'}^2-\frac{1}{n-1}\Big),
	\end{align*}
	and
	\begin{align*}
	&E\Big(r_{ij}^2- \frac{1}{n-1}\Big)\Big(r_{ij'}^2- \frac{1}{n-1}\Big)\Big(r_{i'j}^2-\frac{1}{n-1}\Big)\Big(r_{i'j'}^2-\frac{1}{n-1}\Big)\\
	=&E\Big\{E\Big[\Big(r_{ij}^2- \frac{1}{n-1}\Big)\Big(r_{i'j}^2-\frac{1}{n-1}\Big)\Big|(X_{i},X_{i'})\Big]E\Big[\Big(r_{ij'}^2- \frac{1}{n-1}\Big)\Big(r_{i'j'}^2-\frac{1}{n-1}\Big)\Big|(X_{i},X_{i'})\Big]\\
	=&E\{E[r_{ij}^2r_{i'j}^2|(X_{i},X_{i'})]\}^2-\frac{1}{(n-1)^4}.
	\end{align*}
	Thus, by the relations we derive in the proof of Lemma \ref{l2}, we obtain
	\begin{align*}
	&E(r_{ij}^2r_{i'j}^2|(X_{i},X_{i'}))=E\Big(\Big(\sum_{k=1}^nu_{ik}u_{jk}\Big)^2\Big(\sum_{k=1}^nu_{i'k}u_{jk}\Big)^2\Big|(X_{i},X_{i'})\Big)\\
	=&E\Big\{\Big[\sum_{k=1}^n\Big(u_{ik}u_{jk}\Big)^2+\sum_{k_{1}\neq k_{2}}u_{ik_{1}}u_{ik_{2}}u_{jk_{1}}u_{jk_{2}}\Big]\\
	&\times\Big[\sum_{k=1}^n\Big(u_{i'k}u_{jk}\Big)^2+\sum_{k_{1}\neq k_{2}}u_{i'k_{1}}u_{i'k_{2}}u_{jk_{1}}u_{jk_{2}}\Big]\Big|(X_{i},X_{i'})\Big\}\\
	=&\sum_{k=1}^nu_{ik}^2u_{i'k}^2E(u_{jk}^4)+\sum_{k=1}^n\sum_{k'=1\atop k'\neq k}^nu_{ik}^2u_{i'k}^2E(u_{jk}^2u_{jk'}^2)\\
	&+2\sum_{k=1}^n\sum_{k_{1}\neq k}^nu_{ik}^2u_{i'k}u_{i'k_{1}}E(u_{jk}^3u_{jk_{1}})+\sum_{k=1}^n\sum_{k_{1}\neq k}^n\sum_{k_2\neq k_1\atop k_2\neq k}^nu_{ik}^2u_{i'k_{1}}u_{i'k_{2}}E(u_{jk}^2u_{jk_{1}}u_{jk_{2}})\\
	&+2\sum_{k=1}^n\sum_{k_{1}\neq k}^nu_{i'k}^2u_{ik}u_{ik_{1}}E(u_{jk}^3u_{jk_{1}})+\sum_{k=1}^n\sum_{k_{1}\neq k}^n\sum_{k_2\neq k_1\atop k_2\neq k}^nu_{i'k}^2u_{ik_{1}}u_{ik_{2}}E(u_{jk}^2u_{jk_{1}}u_{jk_{2}})\\
	&+2\sum_{k_{1}\neq k_{2}}^nu_{ik_{1}}u_{ik_{2}}u_{i'k_{1}}u_{i'k_{2}}E(u_{jk}^2u_{jk'}^2)+4\sum_{k_{1}\neq k_{2}}^n\sum_{k_{3}\neq k_{1}\atop k_{3} \neq k_{2}}^nu_{ik_{1}}u_{ik_{2}}u_{i'k_{1}}u_{i'k_{3}}E(u_{jk}^2u_{jk_{1}}u_{jk_{2}})\\
	&+\sum_{k_{1}\neq k_{2}}^n\sum_{k_{3}\neq k_{1}\atop k_{3}\neq k_{2}}^n\sum_{\substack{k_{4}\neq k_{1}\\k_{4}\neq k_{2}\\k_{4}\neq k_{3}}}^nu_{ik_{1}}u_{ik_{2}}u_{i'k_{3}}u_{i'k_{4}}E(u_{jk_{1}}u_{jk_{2}}u_{jk_{3}}u_{jk_{4}})\\
	=&\sum_{k=1}^nu_{ik}^2u_{i'k}^2\Big(O\Big(\frac{1}{n^2}\Big)\Big)
	+\sum_{k_{1}\neq k_{2}}^nu_{ik_{1}}u_{ik_{2}}u_{i'k_{1}}u_{i'k_{2}}\Big(O\Big(\frac{1}{n^2}\Big)\Big)\\
	&+\sum_{k_{1}\neq k_{2}}^nu_{ik_{1}}u_{ik_{2}}u_{i'k_{1}}^2\Big(O\Big(\frac{1}{n^3}\Big)\Big)+\frac{1}{n(n-1)}+O\Big(\frac{1}{n^3}\Big).
	\end{align*}
	We use the above findings to derive that
	\begin{align}
	\notag&E\{E[r_{ij}^2r_{i'j}^2|(X_{i},X_{i'})]\}^2-\frac{1}{(n-1)^4}\\
	\notag=&E\Big[\sum_{k=1}^nu_{ik}^2u_{i'k}^2\Big(O\Big(\frac{1}{n^2}\Big)\Big)
	+\sum_{k_{1}\neq k_{2}}^nu_{ik_{1}}u_{ik_{2}}u_{i'k_{1}}u_{i'k_{2}}\Big(O\Big(\frac{1}{n^2}\Big)\Big)\\
	\notag&+\sum_{k_{1}\neq k_{2}}^nu_{ik_{1}}u_{ik_{2}}u_{i'k_{1}}^2\Big(O\Big(\frac{1}{n^3}\Big)\Big)+\frac{1}{n(n-1)}+O\Big(\frac{1}{n^3}\Big)\Big]^2-\frac{1}{(n-1)^4}\\
	=&O\Big(\frac{1}{n^5}\Big).\label{l24}
	\end{align}
	The relation (\ref{l24}) also shows that
	\begin{align*}
	&E\Big(\sum_{j=1\atop j\neq i,i'}^p\Big(r_{ij}^2-\frac{1}{n-1}\Big)\Big)^2\Big(\sum_{j=1\atop j\neq i,i'}^p\Big(r_{i'j}^2-\frac{1}{n-1}\Big)\Big)^2\\
	=&E\Big(\sum_{j=1\atop j\neq i,i'}^p\Big(r_{ij}^2-\frac{1}{n-1}\Big)^2\Big)\Big(\sum_{j=1\atop j\neq i,i'}^p\Big(r_{i'j}^2-\frac{1}{n-1}\Big)^2\Big)+O\Big(\frac{1}{n^3}\Big).
	\end{align*}
	In the following, we define  $\mathcal{F}_{i}=\sigma(X_j,j\neq i), \mathcal{F}_{i,i'}=\sigma(X_j,j\neq i, i')$. Then we have by (\ref{lz7})
	\begin{align*}
	&\quad E\Big(\sum_{j=1\atop j\neq i,i'}^pr_{ij}^2-\frac{1}{n-1}\Big)^2\Big(\sum_{j=1\atop j\neq i,i'}^pr_{i'j}^2-\frac{1}{n-1}\Big)^2\\
	&=E\Big\{E\Big[\sum_{j=1\atop j\neq i,i'}^p\Big(r_{ij}^2-\frac{1}{n-1}\Big)^2\Big|\mathcal{F}_{i,i'}\Big]E\Big[\sum_{j=1\atop j\neq i,i'}^p\Big(r_{i'j}^2-\frac{1}{n-1}\Big)^2\Big|\mathcal{F}_{i,i'}\Big]\Big\}+O\Big(\frac{1}{n^3}\Big)\\
	&=E\Big\{\sum_{j=1\atop j\neq i,i'}^p\Big[E\Big(r_{ij}^4\Big|\mathcal{F}_{i,i'}\Big)-\frac{1}{(n-1)^2}\Big]\Big\}\Big\{\sum_{j=1\atop j\neq i,i'}^p\Big[E\Big(r_{i'j}^4\Big|\mathcal{F}_{i,i'}\Big)-\frac{1}{(n-1)^2}\Big]\Big\}\\
	&=E\Big\{\sum_{j=1\atop j\neq i,i'}^p\frac{2}{n^2}+O\Big(\frac{1}{n^3}\Big)+\Big[\frac{K-3}{n^2}+O\Big(\frac{1}{n^3}\Big)\Big]\sum_{k=1}^nu_{jk}^4\Big\}\\
	&\times\Big\{\sum_{j=1\atop j\neq i,i'}^p\frac{2}{n^2}+O\Big(\frac{1}{n^3}\Big)+\Big[\frac{K-3}{n^2}+O\Big(\frac{1}{n^3}\Big)\Big]\sum_{k=1}^nu_{jk}^4\Big\}\\
	&=E\Big\{\frac{2p}{n^2}+O\Big(\frac{1}{n^2}\Big)+\Big[\frac{K-3}{n^2}+O\Big(\frac{1}{n^3}\Big)\Big]\sum_{j=1\atop j\neq i,i'}^p\sum_{k=1}^nu_{jk}^4\Big\}\\
	&\times\Big\{\frac{2p}{n^2}+O\Big(\frac{1}{n^2}\Big)+\Big[\frac{K-3}{n^2}+O\Big(\frac{1}{n^3}\Big)\Big]\sum_{j=1\atop j\neq i,i'}^p\sum_{k=1}^nu_{jk}^4\Big\}\\
	&=\frac{4p^2}{n^4}+O\Big(\frac{1}{n^3}\Big)+E\Big\{\Big[\frac{K-3}{n^2}+O\Big(\frac{1}{n^3}\Big)\Big]^2\Big(\sum_{j=1\atop j\neq i,i'}^p\sum_{k=1}^nu_{jk}^4\Big)^2\Big\}\\
	&=\frac{4p^2}{n^4}+O\Big(\frac{1}{n^3}\Big)+\Big[\frac{(K-3)^2}{n^4}+O\Big(\frac{1}{n^5}\Big)\Big]E\Big[\sum_{j=1\atop j\neq i,i'}^p\Big(\sum_{k=1}^nu_{jk}^4\Big)^2+\sum_{j\neq j'\atop j,j'\neq i,i'}\Big(\sum_{k=1}^nu_{jk}^4\Big)\Big(\sum_{k=1}^nu_{j'k}^4\Big)\Big]\\
	&\leq\frac{4p^2}{n^4}+O\Big(\frac{1}{n^3}\Big)+\Big[\frac{(K-3)^2}{n^4}+O\Big(\frac{1}{n^5}\Big)\Big]\sum_{j\neq j'\atop j,j'\neq i,i'}E\Big(\sum_{k=1}^nu_{jk}^4\Big)E\Big(\sum_{k=1}^nu_{j'k}^4\Big)\\
	&=\frac{4p^2}{n^4}+O\Big(\frac{1}{n^3}\Big).
	\end{align*}
	Here we use the fact $\sum_{k=1}^nu_{jk}^4\leq \sum_{k=1}^nu_{jk}^2=1$.		In view of the above we can conclude that,
	\begin{equation}
	\label{n1}
	E\Big(\sum_{j=1\atop j\neq i,i'}^pr_{ij}^2-\frac{1}{n-1}\Big)^2\Big(\sum_{j=1\atop j\neq i,i'}^pr_{i'j}^2-\frac{1}{n-1}\Big)^2=\frac{4p^2}{n^4}+O\Big(\frac{1}{n^3}\Big).\end{equation}
	Moreover, by (\ref{lz7}), we have
	\begin{align*}
	&E\Big(\sum_{j=1\atop j\neq i,i'}\Big(r_{ij}^2-\frac{1}{n-1}\Big)\Big)^2\Big(r_{ii'}^2-\frac{1}{n-1}\Big)^2\\
	=&E\Big\{E\Big[\Big(\sum_{j=1\atop j\neq i,i'}r_{ij}^2-\frac{1}{n-1}\Big)^2\Big|X_{i}\Big]E\Big[\Big(r_{ii'}^2-\frac{1}{n-1}\Big)^2\Big|X_{i}\Big]\Big\}\\
	=&E\Big\{\frac{2p}{n^2}+O\Big(\frac{1}{n^2}\Big)+(p-2)\Big[\frac{K-3}{n^2}+O\Big(\frac{1}{n^3}\Big)
	\sum_{k=1}^nu_{ik}^4\Big]\Big\}\\
	\times&\Big\{\frac{2}{n^2}+O\Big(\frac{1}{n^3}\Big)+\Big[\frac{K-3}{n^2}+O\Big(\frac{1}{n^3}\Big)\Big]\sum_{k=1}^nu_{ik}^4\Big\}\\
	\leq&E\Big[\frac{2p}{n^2}+\frac{K-3}{n}+O\Big(\frac{1}{n^2}\Big)\Big]\Big[\frac{2}{n^2}+\frac{K-3}{n^2}+O\Big(\frac{1}{n^3}\Big)\Big]\\
	=&O\Big(\frac{1}{n^3}\Big)
	\end{align*}
	which is obtained by applying the inequality $\Big(\sum_{k=1}^nu_{jk}^4\Big)^2\leq 1$ .
	
	Thus, we have:
	\begin{equation}
	\label{n2}
	E\Big(\sum_{j=1\atop j\neq i,i'}\Big(r_{ij}^2-\frac{1}{n-1}\Big)\Big)^2\Big(r_{ii'}^2-\frac{1}{n-1}\Big)^2=O\Big(\frac{1}{n^3}\Big).\end{equation}
	
	Finally, we can obtain the results below by Cauchy's inequality,
	\begin{align}
	\notag&E\Big(\sum_{j=1\atop j\neq i,i'}\Big(r_{ij}^2-\frac{1}{n-1}\Big)\Big)^2\Big(\sum_{j=1\atop j\neq i,i'}\Big(r_{i'j}^2-\frac{1}{n-1}\Big)\Big)\Big(r_{ii'}^2-\frac{1}{n-1}\Big)\\
	\notag=&E\Big[\sum_{j=1\atop j\neq i,i'}\Big(r_{ij}^2-\frac{1}{n-1}\Big)^2\Big]\Big(\sum_{j=1\atop j\neq i,i'}\Big(r_{i'j}^2-\frac{1}{n-1}\Big)\Big)\Big(r_{ii'}^2-\frac{1}{n-1}\Big)\\
	\notag=&\sum_{j=1\atop j\neq i,i'}^pE\Big(r_{ij}^2-\frac{1}{n-1}\Big)^2\Big(r_{i'j}^2-\frac{1}{n-1}\Big)\Big(r_{ii'}^2-\frac{1}{n-1}\Big)\\
	\notag\leq&\Big(p-2\Big)\sqrt{E\Big(r_{ij}^2-\frac{1}{n-1}\Big)^2\Big(r_{i'j}^2-\frac{1}{n-1}\Big)^2}\sqrt{E\Big(r_{ij}^2-\frac{1}{n-1}\Big)^2\Big(r_{ii'}^2-\frac{1}{n-1}\Big)^2}\\
	=&O\Big(\frac{1}{n^3}\Big).\label{l25}
	\end{align}
	Also we have
	\begin{align}
	\notag&E\Big(r_{ii'}^2-\frac{1}{n-1}\Big)^3\Big(\sum_{j=1\atop j\neq i,i'}r_{i'j}^2-\frac{1}{n-1}\Big)\\
	\notag\leq&\sqrt{E\Big(r_{ii'}^2-\frac{1}{n-1}\Big)^2\Big(\sum_{j=1\atop j\neq i,i'}r_{i'j}^2-\frac{1}{n-1}\Big)^2}\sqrt{E\Big(r_{ii'}^2-\frac{1}{n-1}\Big)^4}\\
	=&O\Big(\frac{1}{n^3}\Big)\label{l26}
	\end{align}
	and
	\begin{align}
	\notag&E\Big(r_{ii'}^2-\frac{1}{n-1}\Big)^2\Big(\sum_{j=1\atop j\neq i,i'}\Big(r_{i'j}^2-\frac{1}{n-1}\Big)\Big)\Big(\sum_{j=1\atop j\neq i,i'}r_{ij}^2-\frac{1}{n-1}\Big)\\
	\notag\leq&\sqrt{E\Big(r_{ii'}^2-\frac{1}{n-1}\Big)^2\Big(\sum_{j=1\atop j\neq i,i'}\Big(r_{i'j}^2-\frac{1}{n-1}\Big)\Big)^2}\sqrt{E\Big(r_{ii'}^2-\frac{1}{n-1}\Big)^2\Big(\sum_{j=1\atop j\neq i,i'}\Big(r_{ij}^2-\frac{1}{n-1}\Big)\Big)^2}\\
	=&O\Big(\frac{1}{n^3}\Big).\label{l27}
	\end{align}
	Combining (\ref{l24}) to (\ref{l27}), we obtain
	\begin{equation}\label{l28}
    E\Big(\sum_{j=1\atop j\neq i}^p\Big(r_{ij}^2-\frac{1}{n-1}\Big)\Big)^2\Big(\sum_{j=1\atop j\neq i'}^p\Big(r_{i'j}^2-\frac{1}{n-1}\Big)\Big)^2\leq\frac{4p^2}{n^4}+O\Big(\frac{1}{n^3}\Big).
	\end{equation}
	By (\ref{l23}) and (\ref{l28}), recalling that $ p=O\Big(\frac{1}{n^3}\Big) $, we can easily get
	\begin{align}
     \notag EJ_1^2&\leq\frac{1}{p}O\Big(\frac{1}{n^2}\Big)+\frac{p(p-1)}{p^2}\cdot\frac{4p^2}{n^4}-\frac{4(p-1)^2}{n^4}+ O\Big(\frac{1}{n^3}\Big)\\
     &\leq O\Big(\frac{1}{n^3}\Big).
   \end{align}The proof is complete.
	
\end{proof}

\begin{lemma} Under  the condition $E(X_{11}^6)<\infty$,
	
	\begin{equation}\label{l3}
	\frac{1}{\lambda}\sqrt{E\Big|E\Big(\Delta|\Delta| \,\big|\textbf{X}\Big)\Big|^2  }=O\Big(\frac{1}{p^{1/2}}\Big).
	\end{equation}
\end{lemma}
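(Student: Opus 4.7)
The plan is to follow the general strategy used to prove Lemma~\ref{l2}, adapted to the absolute-value structure by a Taylor expansion of $f(x)=x|x|$.

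First, since $\theta$ is uniform on $\{1,\ldots,p\}$ and $X_i^{*}$ is an independent copy of $X_i$, integrating over $\theta$ and $X_\theta^{*}$ gives
\[
E(\Delta|\Delta|\mid \textbf{X}) \;=\; \frac{c_{n,p}^{2}}{p}\sum_{i=1}^{p} E\bigl((A_i-A_i^{*})|A_i-A_i^{*}|\;\big|\;\mathcal{F}_i\bigr),
\]
where $A_i:=\sum_{j\neq i}r_{ij}^{2}$, $A_i^{*}:=\sum_{j\neq i}r_{i^{*}j}^{2}$, and $\mathcal{F}_i:=\sigma(X_j,\,j\neq i)$.

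Second, set $m:=(p-1)/(n-1)$. By the same computation as in (\ref{rij2}) one has $E(r_{ij}^{2}\mid\mathcal{F}_i)=1/(n-1)$, so $E(A_i^{*}-m\mid\mathcal{F}_i)=0$. A Taylor expansion of $f(x)=x|x|$ (with $f'(x)=2|x|$ and $|f''(x)|\le 2$), together with the vanishing of the resulting first-order term, then yields
\[
E\bigl((A_i-A_i^{*})|A_i-A_i^{*}|\mid \textbf{X}\bigr) \;=\; (A_i-m)|A_i-m|+R_i,\qquad |R_i|\le E\bigl[(A_i^{*}-m)^{2}\mid\mathcal{F}_i\bigr].
\]

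Third, I would bound the $L^{2}$ norms of the two resulting pieces separately. For the principal contribution $\frac{c_{n,p}^{2}}{p}\sum_i(A_i-m)|A_i-m|$, the squared $L^{2}$ norm splits into a diagonal part, bounded by $\sum_i E(A_i-m)^{4}=O(p/n^{2})$ as in (\ref{l23}), and an off-diagonal part $\sum_{i\neq i'}E[(A_i-m)|A_i-m|(A_{i'}-m)|A_{i'}-m|]$, which I would handle by conditioning on $\mathcal{F}_{i,i'}:=\sigma(X_k,\,k\neq i,i')$, separating out the shared $r_{ii'}^{2}$ term, and applying Cauchy's inequality in the style of (\ref{n1})-(\ref{l28}). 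For the remainder $\frac{c_{n,p}^{2}}{p}\sum_i R_i$, the key observation is that $E[(A_i^{*}-m)^{2}\mid\mathcal{F}_i]$ admits a leading $\mathcal{F}_i$-independent constant of order $p/n^{2}$ (derivable from (\ref{lz7}) and (\ref{lz2})-(\ref{lz4})); after subtracting this constant, the remaining fluctuations are controlled by exploiting the approximate symmetry of $\mathrm{sgn}(A_i-m)$ inherited from the conditional CLT given $X_i$, which prevents a naive triangle-inequality loss. Combining the bounds gives $E|E(\Delta|\Delta|\mid \textbf{X})|^{2}=O(p^{-3})$; multiplying by $1/\lambda=p/2$ delivers the claimed $O(p^{-1/2})$.

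The main obstacle is the off-diagonal analysis of the principal term: the pairs $(A_i-m,\,A_{i'}-m)$ are dependent both through the shared observations $\{X_k\}_{k\neq i,i'}$ and through the single common term $r_{ii'}^{2}$, and the absolute values destroy the even-order Isserlis-type cancellations exploited in Lemma~\ref{l2}. Extracting the necessary cancellation under only the sixth-moment hypothesis requires a careful decomposition that respects the approximate symmetry of $A_i-m$, and a similar delicate argument is needed to upgrade the trivial bound on $\sum_i R_i$ to an $L^{2}$ estimate of the correct order.
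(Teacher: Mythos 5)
Your Taylor-expansion decomposition is a genuinely different route from the paper's, but as sketched it has a gap that I don't think can be closed with the tools you invoke. The paper never linearizes $x|x|$: it keeps $k_i=E\bigl[(A_i-A_i^*)|A_i-A_i^*|\,\big|\,\textbf{X}\bigr]$ intact and exploits the \emph{exact} antisymmetry $E(k_i\mid\mathcal{F}_i)=0$ (swapping $X_i$ and $X_i^*$ flips the sign of the integrand but preserves the conditional law given $\mathcal{F}_i$). It then introduces the truncated statistics $k_i^{(i')}$, which are $\mathcal{F}_{i'}$-measurable and conditionally independent of $k_{i'}^{(i)}$ given $\mathcal{F}_{i,i'}$; this kills every cross term in $Cov(k_i,k_{i'})$ except $Cov\bigl(k_i-k_i^{(i')},k_{i'}-k_{i'}^{(i)}\bigr)$, which involves only the single summand $r_{ii'}^2-r_{i^*i'}^2$ and is bounded by polynomial moment computations as in (\ref{l32}). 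Your decomposition $(A_i-m)|A_i-m|+R_i$ discards exactly this structure: neither piece has zero conditional mean given $\mathcal{F}_i$, so the off-diagonal sum no longer collapses, and you are left needing $E\bigl[(A_i-m)|A_i-m|(A_{i'}-m)|A_{i'}-m|\bigr]=O(n^{-3})$ per pair. Since $E\bigl[(A_i-m)|A_i-m|\bigr]$ alone is only small because of approximate distributional symmetry of $A_i-m$ (an Edgeworth-type cancellation, not an algebraic identity), establishing the required $O(n^{-3/2})$ bound for it, and the corresponding pairwise bounds, cannot be done ``in the style of (\ref{n1})--(\ref{l28})'': those are exact polynomial moment identities, and, as you yourself note, the absolute value destroys them. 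You name this as the main obstacle but do not supply the argument that overcomes it.

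The remainder term has the same problem in a sharper form. The trivial bound $|R_i|\le E\bigl[(A_i^*-m)^2\mid\mathcal{F}_i\bigr]$ has mean of order $p/n^2$, so $\frac{c_{n,p}^2}{p}\sum_i|R_i|$ is of order $p^{-1}$ and, after multiplying by $1/\lambda=p/2$, gives $O(1)$ rather than $O(p^{-1/2})$; you need the $L^2$ norm of $\frac{c_{n,p}^2}{p}\sum_iR_i$ to be $O(p^{-3/2})$, a full factor $p^{1/2}$ better. Your proposed fix --- subtract the deterministic leading constant of $E[(A_i^*-m)^2\mid\mathcal{F}_i]$ and appeal to the approximate symmetry of $\mathrm{sgn}(A_i-m)$ --- does not apply to a one-sided bound on $|R_i|$: the actual remainder is $R_i=E\bigl[\int_0^1(1-t)f''\bigl((A_i-m)-t(A_i^*-m)\bigr)(A_i^*-m)^2\,dt\mid\textbf{X}\bigr]$ with $f''=2\,\mathrm{sgn}$, and since $A_i^*-m$ and $A_i-m$ are of the same order $\sqrt{p}/n$, the sign inside is not $\mathrm{sgn}(A_i-m)$, so you would have to identify the signed leading behaviour of $R_i$ and then prove that $\sum_i\mathrm{sgn}(\cdot)$-type sums have $O(p)$ second moment, i.e.\ that the pairwise sign covariances are $O(1/p)$ --- again an Edgeworth-level statement about dependent, non-polynomial functionals under only a sixth moment. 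I would recommend abandoning the linearization and working directly with $k_i$ and $k_i^{(i')}$ as the paper does. (Minor point: your first display should condition on $\textbf{X}$, not on $\mathcal{F}_i$; as written it also integrates out $X_i$ and does not equal $E(\Delta|\Delta|\mid\textbf{X})$.)
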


\begin{proof}

	Define
	\begin{gather*}
	k_{i}=E\Big(\sum_{j=1\atop j\neq i}^p\Big(r_{ij}^2-r_{i^*j}^2\Big)\Big|\sum_{j=1\atop j\neq i}^p\Big(r_{ij}^2-r_{i^*j}^2\Big)\Big|\Big|\textbf{X}\Big),\\
	k_{i}^{(i')}=E\Big(\sum_{j=1\atop j\neq i,i'}^p\Big(r_{ij}^2-r_{i^*j}^2\Big)\Big|\sum_{j=1\atop j\neq i,i'}^p\Big(r_{ij}^2-r_{i^*j}^2\Big)\Big|\Big|\textbf{X}\Big),\\
	\end{gather*}
    and recall $	{\cal F}_{i}=\sigma(X_j,j\neq i),  {\cal F}_{i,i'}=\sigma(X_j,j\neq i,i').$
	Then
	\begin{align*}
	E\Big(\Delta|\Delta| \,\big|\textbf{X}\Big)=\frac{c_{n,p}^2}{p}E\sum_{i=1}^nk_{i}.
	\end{align*}
	By the symmetry property of $k_{i}$, we have
	\begin{align*}
		&E\Big(E\Big(k_{i}\Big|{\cal F}_{i}\Big)\Big)=0,\\
	&E\Big(k_{i}^{(i')}\Big)=E\Big(E(k_{i}^{(i')}|{\cal F}_{i})\Big)=0.
	\end{align*}
	Given the field ${\cal F}_{i,i'}  $, $ k_{i}^{(i')} $ is conditionally independent of $ k_{i'}^{(i)} $. Also, $ k_{i}^{(i')} $ is conditionally independent of $ k_{i'} $ given $ {\cal F}_{i} $. Thus we have
	\begin{align*}
	&Cov\Big(k_{i'}^{(i)},k_{i}^{(i')}\Big)=E\Big(E(k_{i'}^{(i)}|{\cal F}_{i,i'})\cdot E(k_{i}^{(i')}|{\cal F}_{i,i'})\Big)=0,\\
	&Cov\Big(k_{i}^{(i')},k_{i'}\Big)=E\Big(k_{i}^{(i')}\cdot E(k_{i'}|{\cal F}_{i})\Big)=0.
	\end{align*}
	Recall that $ \lambda=\frac{2}{p} $. It suffices to estimate $ Var\Big( E(\Delta|\Delta| \,\big|\textbf{X})\Big) $.
	\begin{align}\label{l31}
	Var\Big(E(\Delta|\Delta| \,|\textbf{X})\Big)=\frac{c_{n,p}^4}{p^2}\Big(\sum_{i=1}^pE(k_{i}^2)+\sum_{i\neq i'}Cov(k_{i},k_{i'})\Big).
	\end{align}
		By (\ref{lz7}), (\ref{lemma1}) and (\ref{rij2}), we get
	\begin{align}
	\notag	Ek^2_i&\leq E\Big(\sum_{j=1\atop j\neq i
	}^{p}\Big(r^2_{ij}-r^2_{i^*j}\Big)\Big)^4\\
	\notag&\leq C\Big\{ E\sum_{j=1\atop j\neq i
	}^{p}\Big(r^2_{ij}-r^2_{i^*j}\Big)^4 +E\sum_{j_1\neq	j_2\neq i}\Big(r^2_{ij_1}-r^2_{i^*j_1}\Big)^2\Big(r^2_{ij_2}-r^2_{i^*j_2}\Big)^2 \Big \}
	\\
	&=O\Big(\frac{1}{n^2}\Big).\label{l33}
	\end{align}
	For $Cov(k_{i},k_{i'})$, we have
	\begin{align*}
	Cov(k_{i},k_{i'})&=Cov\Big(k_i,k_{i'}^{(i)}\Big)+Cov\Big(k_{i'},k_{i'}^{(i)}\Big)\\
	&+Cov\Big(k_{i'}^{(i)},k_{i}^{(i')}\Big)+Cov\Big(k_{i}-k_{i}^{(i')},k_{i'}-k_{i'}^{(i)}\Big)\\
	&=Cov\Big(k_{i}-k_{i}^{(i')},k_{i'}-k_{i'}^{(i)}\Big).
	\end{align*}
	Then we get
	$$E\Big(k_{i}-k_{i}^{(i')}\Big)\Big(k_{i'}-k_{i'}^{(i)}\Big)\leq \frac{1}{2}\Big(E\Big(k_{i}-k_{i}^{(i')}\Big)^2+E\Big(k_{i'}-k_{i'}^{(i)}\Big)^2\Big). $$

	By (\ref{lz7}), we obtain
	\begin{align}
\notag	E\Big(k_{i}-k_{i}^{(i')}\Big)^2&\leq     8E\Big((r_{ii'}^2-r_{i^*i'}^2)^4+(r_{ii'}^2-r_{i^*i'}^2)^2\Big(\sum_{j=1\atop j\neq i,i'}^p(r_{ij}^2-r_{i^*j}^2)\Big)^2\Big)\\
\notag	&=E\Big((r_{ii'}^2-r_{i^*i'}^2)^4+(r_{ii'}^2-r_{i^*i'}^2)^2\Big(\sum_{j=1\atop j\neq i,i'}^p(r_{ij}^2-r_{i^*j}^2)^2\Big) \Big)\\
\notag	&\leq E(r^2_{ii'}-r_{i^*i'}^2)^4+\sum^p_{j=1\atop j\neq i,i'}E\Big((r_{ii'}^2-r_{i^*i'}^2)^2\cdot(r_{ij}^2-r_{i^*j}^2)^2\Big) \\
\notag    &\leq C\Big(E\Big(r^2_{ii'}-\frac{1}{n-1}\Big)^4+\sum_{j=1\atop j\neq i,i'}^pE[(r_{ii'}^2-r_{i^*i'}^2)^2|X_{i},X_{i^*}]E[(r_{ij}^2-r_{i^*j}^2)^2|X_{i},X_{i^*}]\Big)\\
    &=O(\frac{1}{n^{3}}).\label{l32}
	\end{align}
		Thus, by (\ref{l31}), (\ref{l33}) and (\ref{l32}), the proof is complete.
\end{proof}

Now, by (\ref{rij2}), we notice that
\begin{align*}
EW_{n,p}^4=&c^4_{n,p}\Big\{ \sum_{i=2}^{p}\sum_{j=1}^{i-1} \Big(r^2_{ij}-\frac{1}{n-1}\Big)^4 +\sum E\Big(r_{i_1j_1}^2-\frac{1}{n-1}\Big)^2\cdot\Big(r_{i_2j_2}^2-\frac{1}{n-1}\Big)^2\\
&+
\sum E\Big(r^2_{i_1j_1}-\frac{1}{n-1}\Big)\Big(r^2_{i_1j_2}-\frac{1}{n-1}\Big)\Big(r^2_{i_2j_1}-\frac{1}{n-1}\Big)\Big(r^2_{i_2j_2}-\frac{1}{n-1}\Big)\Big\}.
\end{align*}
And we know the facts
\begin{align*}
&E\Big(r^2_{ij}-\frac{1}{n-1}\Big)^4=O\Big(\frac{1}{n^2}\Big),\\
&E\Big(r_{i_1j_1}^2-\frac{1}{n-1}\Big)^2\cdot\Big(r_{i_2j_2}^2-\frac{1}{n-1}\Big)^2=O\Big(\frac{1}{n^4}\Big),\\
&E\Big(r^2_{i_1j_1}-\frac{1}{n-1}\Big)\Big(r^2_{i_1j_2}-\frac{1}{n-1}\Big)\Big(r^2_{i_2j_1}-\frac{1}{n-1}\Big)\Big(r^2_{i_2j_2}-\frac{1}{n-1}\Big)=O\Big(\frac{1}{n^5}\Big).
\end{align*}
Combining those relations above, we can conclude that $ EW_{n,p}^4\leq C $. By the same arguments in the proof of (\ref{th4.1} ) , the bound  can be improved by replacing $ \frac{C}{1+|z|} $ with $ \frac{C}{(1+|z|)^2} $. Finally, by (\ref{2.3}), (\ref{l21}) and (\ref{l3}), we  establish (\ref{itr}). 

{\bf Acknowledgments}

We thank Prof. Qiman Shao and Dr. Zhuosong Zhang for helpful comments and suggestions and thank Prof. Jordan Stoyanov for his suggestions on revision. This research work is supported by the National Natural Science Foundation of China (No.  11701331),  Shandong Provincial Natural Science Foundation (No. ZR2017QA007) and Young Scholars Program of Shandong University.

\section*{Reference}

\bibliographystyle{amsplain}

\end{document}